\newtheorem{thm}{Theorem}[section]
\newtheorem{prop}[thm]{Proposition}
\newtheorem{cor}[thm]{Corollary}
\newtheorem{lem}[thm]{Lemma}
\newtheorem{conj}[thm]{Conjecture}
\newtheorem{exa}[thm]{Example}
\theoremstyle{definition}
\newtheorem{defn}[thm]{Definition}
\newcommand{\da}{\downarrow}
\newcommand{\rhoh}{\hat{\rho}}
\newcommand{\chih}{\hat{\chi}}
\newcommand{\blr}[2]{\filldraw[fill=black, draw=white, very thick] #1 rectangle #2;}
\newcommand{\yer}[2]{\filldraw[fill=yellow, draw=white, very thick] #1 rectangle #2;}
\DeclareMathOperator{\PL}{PL}
\DeclareMathOperator{\B}{B}
\newcommand{\ben}{\begin{enumerate}}
\newcommand{\een}{\end{enumerate}}
\newcommand{\ble}{\begin{lem}}
\newcommand{\ele}{\end{lem}}
\newcommand{\bth}{\begin{thm}}
\newcommand{\bpr}{\begin{prop}}
\newcommand{\epr}{\end{prop}}
\newcommand{\bco}{\begin{cor}}
\newcommand{\eco}{\end{cor}}
\newcommand{\bcon}{\begin{conj}}
\newcommand{\econ}{\end{conj}}
\newcommand{\bde}{\begin{defn}}
\newcommand{\ede}{\end{defn}}
\newcommand{\bex}{\begin{exa}}
\newcommand{\eex}{\end{exa}}
\newcommand{\barr}{\begin{array}}
\newcommand{\earr}{\end{array}}
\newcommand{\btab}{\begin{tabular}}
\newcommand{\etab}{\end{tabular}}
\newcommand{\beq}{\begin{equation}}
\newcommand{\eeq}{\end{equation}}
\newcommand{\bea}{\begin{eqnarray*}}
\newcommand{\eea}{\end{eqnarray*}}
\newcommand{\bal}{\begin{align*}}
\newcommand{\bce}{\begin{center}}
\newcommand{\ece}{\end{center}}
\newcommand{\bpi}{\begin{picture}}
\newcommand{\epi}{\end{picture}}
\newcommand{\bpp}{\begin{picture}}
\newcommand{\epp}{\end{picture}}
\newcommand{\bfi}{\begin{figure} \begin{center}}
\newcommand{\efi}{\end{center} \end{figure}}
\newcommand{\bprf}{\begin{proof}}
\newcommand{\eprf}{\end{proof}\medskip}
\newcommand{\bsl}{\begin{slide}{}}
\newcommand{\esl}{\end{slide}}
\newcommand{\bfr}{\begin{frame}}
\newcommand{\efr}{\end{frame}}
\newcommand{\hqed}{\hfill \qed}
\newcommand{\eqqed}[1]{$\rule{1ex}{0ex}\hfill{\dil#1}\hfill\qed$}
\newcommand{\hs}[1]{\hspace{#1}}
\newcommand{\hso}[1]{\hspace{-1pt}}
\newcommand{\vs}[1]{\vspace{#1}}
\newcommand{\emp}{\emptyset}
\newcommand{\sbe}{\subseteq}
\newcommand{\setm}{\setminus}
\newcommand{\lec}{\lessdot}
\newcommand{\grc}{\gtrdot}
\newcommand{\zh}{\hat{0}}
\newcommand{\oh}{\hat{1}}
\newcommand{\Ph}{\hat{P}}
\newcommand{\case}[4]{\left\{\barr{ll}#1&\mbox{#2}\\#3&\mbox{#4}\earr\right.}
\def\<{\langle}
\def\>{\rangle}
\newcommand{\ra}{\rightarrow}
\newcommand{\al}{\alpha}
\newcommand{\be}{\beta}
\newcommand{\de}{\delta}
\newcommand{\si}{\sigma}
\newcommand{\bbF}{{\mathbb F}}
\newcommand{\bbZ}{{\mathbb Z}}
\newcommand{\cA}{{\mathcal A}}
\newcommand{\cG}{{\mathcal G}}
\newcommand{\cI}{{\mathcal I}}
\newcommand{\cL}{{\mathcal L}}
\newcommand{\cM}{{\mathcal M}}
\newcommand{\cO}{{\mathcal O}}
\newcommand{\cS}{{\mathcal S}}
\def\multiset#1#2{\ensuremath{\left(\kern-.3em\left(\genfrac{}{}{0pt}{}{#1}{#2}\right)\kern-.3em\right)}}
\DeclareMathOperator{\lcm}{lcm}
\DeclareMathOperator{\st}{st}
\newcommand{\dil}{\displaystyle}
\begin{document}
\pagestyle{plain}

\title{Rowmotion on rooted trees
}
\author{Pranjal Dangwal}
\address{ Department of Mathematics, Michigan State University,
 East Lansing, MI 48824, USA}
\email{dangwalp@msu.edu}
\author{Jamie Kimble}
\address{ Department of Mathematics, Michigan State University,
 East Lansing, MI 48824, USA}
\email{kimblej2@msu.edu}
\author{Jinting Liang}
\address{ Department of Mathematics, Michigan State University,
 East Lansing, MI 48824, USA}
\email{liangj26@msu.edu}
\author{Jianzhi Lou}
\address{ Department of Mathematics, Michigan State University,
 East Lansing, MI 48824, USA}
\email{loujianz@msu.edu}
\author{Bruce E. Sagan}
\address{ Department of Mathematics, Michigan State University,
 East Lansing, MI 48824, USA}
\email{bsagan@msu.edu}
\author{Zach Stewart}
\address{ Department of Mathematics, Michigan State University,
 East Lansing, MI 48824, USA}
\email{stewa719@msu.edu}

\date{\today}

\subjclass{ 05E18 (Primary) 06A07  (Secondary)}

\keywords{homomesy, homometry, poset, rooted tree, rowmotion, tiling}
 
\maketitle

\begin{abstract}
A rooted tree $T$ is a poset whose Hasse diagram is a graph-theoretic tree having  a unique minimal element.  We study rowmotion on antichains and lower order ideals of $T$.  Recently Elizalde, Roby, Plante and Sagan considered rowmotion on fences which are posets whose Hasse diagram is a path (but permitting any number of minimal elements).  They showed that in this case, the orbits could be described in terms of tilings of a cylinder.  They also defined a new notion called homometry which means that a statistic takes a constant value on all orbits of the same size.  This is a weaker condition than the well-studied concept of homomesy which requires a constant value for the average of the statistic over all orbits.  Rowmotion on fences is often homometric for certain statistics, but not homomesic.  We introduce a tiling model for rowmotion on rooted trees.  We use it to study various specific types of trees and show that they exhibit homometry, although not homomesy, for certain statistics.
\end{abstract}

\tableofcontents


\section{Introduction}

Let  $S$ be a set with $\#S$ finite where the hash symbol denotes cardinality.  A {\em statistic} on $S$ is a function $\st:S\ra\bbZ$ where $\bbZ$ is the integers.  We extend $\st$ to subsets $R\sbe S$ by letting
$$
\st R = \sum_{r\in R} \st r.
$$
Now suppose that $G$ is a finite group acting on $S$.  Statistic $\st$ is said to be
{\em homomesic} if, for any orbit $\cO$ of $G$, we have
$$
\frac{\st \cO}{\#\cO} = c
$$
for some constant $c$.  To be more specific, we say in this case that this statistic is {\em $c$-mesic}.  Homomesy is a well-studied property; see the survey articles of Roby~\cite{rob:dac} or Striker~\cite{str:rgt}.
Recently Elizalde, Roby, Plante, and Sagan~\cite{EPRS:rf} introduced a weaker notion which is exhibited by certain actions and statistics.  
We say that a statistic $\st$ is {\em homometric} if
for any two orbits $\cO_1$ and $\cO_2$ of the same cardinality we have $\st\cO_1 = \st\cO_2$.  We will see numerous examples of statistics which are homometric but not homomesic in the present work.

Now consider a finite partially ordered set, often abbreviated to {\em poset}, $(P,\le)$.  An {\em antichain} of $P$ is a $A\sbe P$ such that no two elements of $A$ are comparable. We denote the set of all antichains as 
$$
\cA(P) = \{A\sbe P \mid \text{$A$ is an antichain}\}.
$$
A {\em lower order ideal of $P$} is $L\sbe P$ such that if $y\in L$ and $x\le y$ then $x\in L$.  We will use the notation
$$
\cL(P) = \{L\sbe P \mid \text{$L$ is a lower order ideal}\}.
$$
The lower order ideal {\em generated} by any $Q\sbe P$ is
$$
Q\da\  = \{x\in P \mid \text{$x\le y$ for some $y\in Q$}\}.
$$
We also let $\min Q$ and $\max Q$ be the sets of minimal and maximal elements of $Q$, respectively.
We now define {\em rowmotion on antichains} to be the action generated by $\rho:\cA(P)\ra\cA(P)$ where
$$
\rho(A)  =\min\{x\not\in (A\da)\}.
$$
Similarly, {\em rowmotion on ideals} has generator $\rhoh:\cL(P)\ra\cL(P)$  with
$$
\rhoh(L) =  \rho(\max L)\da.
$$
We will usually use a hat to distinguish a notation on ideals from the corresponding one on antichains.  More information about rowmotion can be found in the aforementioned survey articles.

The paper of Elizalde et al.\ was devoted to the study of rowmotion on fences.  A fence is a poset whose Hasse diagram is a path.  They showed that the antichain orbits can be modeled using certain tilings of a cylinder.  This tool permitted them to prove a number of homometries which were not homomesies.  In the present work we will consider rowmotion on rooted trees.    A poset $T$ is a {\em rooted tree} if its Hasse diagram is a tree in the graph theory sense of the term, and it has a unique minimal element called the {\em root} and denoted $\zh$.  Note that these posets are more general than fences in that the tree need not be a path, but also more restricted in that fences can have any number of minimal elements.  We will assume all our trees our rooted. 

The rest of this paper is structured as follows.  In the next section we will show that rowmotion on antichains of a rooted tree can also be viewed in terms of certain cylindrical tilings.  The following three sections will apply this tiling model to three different families of trees: stars, trees with three leaves, and finally combs and zippers.  We end with a section with comments and open questions.


\section{Tilings}
\label{t}

\begin{figure}
 \begin{tikzpicture}
\fill(2,0) circle(.1);
\fill(2,1) circle(.1);
\fill(1,2) circle(.1);
\fill(3,2) circle(.1);
\fill(0,3) circle(.1);
\fill(1,3) circle(.1);
\fill(4,3) circle(.1);
\fill(0,4) circle(.1);
\fill(1,4) circle(.1);
\fill(3,4) circle(.1);
\fill(4,4) circle(.1);
\fill(5,4) circle(.1);
\fill(1,5) circle(.1);
\fill(3,5) circle(.1);
\fill(5,5) circle(.1);
\draw (2,0)--(2,1)--(0,3)--(0,4) (1,2)--(1,5) (2,1)--(5,4)--(5,5) 
(4,4)--(4,3)--(3,4)--(3,5);
\draw(0,4.5) node{$1$};
\draw(1,5.5) node{$2$};
\draw(3,5.5) node{$3$};
\draw(4,4.5) node{$4$};
\draw(5,5.5) node{$5$};
\draw(3.3,1.7) node{$x$};
\draw(4.3,2.7) node{$y$};
\draw(2,-1) node{$T$};
 \end{tikzpicture}
\hs{70pt}   
\begin{tikzpicture}
\fill(2,0) circle(.1);
\fill(2,1) circle(.1);
\fill(1,2) circle(.1);
\fill(3,2) circle(.1);
\fill(0,3) circle(.1);
\fill(1,3) circle(.1);
\fill(4,3) circle(.1);
\fill(0,4) circle(.1);
\fill(1,4) circle(.1);
\fill(3,4) circle(.1);
\fill(4,4) circle(.1);
\fill(5,4) circle(.1);
\fill(1,5) circle(.1);
\fill(3,5) circle(.1);
\fill(5,5) circle(.1);
\draw (2,0)--(2,1) (3,2)--(4,3) (0,3)--(0,4) (1,3)--(1,5)
(3,4)--(3,5) (5,4)--(5,5);
\draw(-.5,4.5) node{$([1,1],2)$};
\draw(1,5.5) node{$([2,2],3)$};
\draw(3,5.5) node{$([3,3],2)$};
\draw(4,4.5) node{$([4,4],1)$};
\draw(5,5.5) node{$([5,5],2)$};
\draw(4.3,1.7) node{$x=x_{[3,5],2}$};
\draw(5,2.7) node{$y=x_{[3,5],1}$};
\draw(1,.5) node{$([1,5],2)$};
\draw(0,2) node{$([1,2],1)$};
\draw(4.5,2.2) node{$([3,5],2)$};
\draw(2,-1) node{$\cI(T)$};
 \end{tikzpicture}
    \caption{The intervals, branches, and $\be$-values of a tree $T$}
    \label{int:fig}
\end{figure}
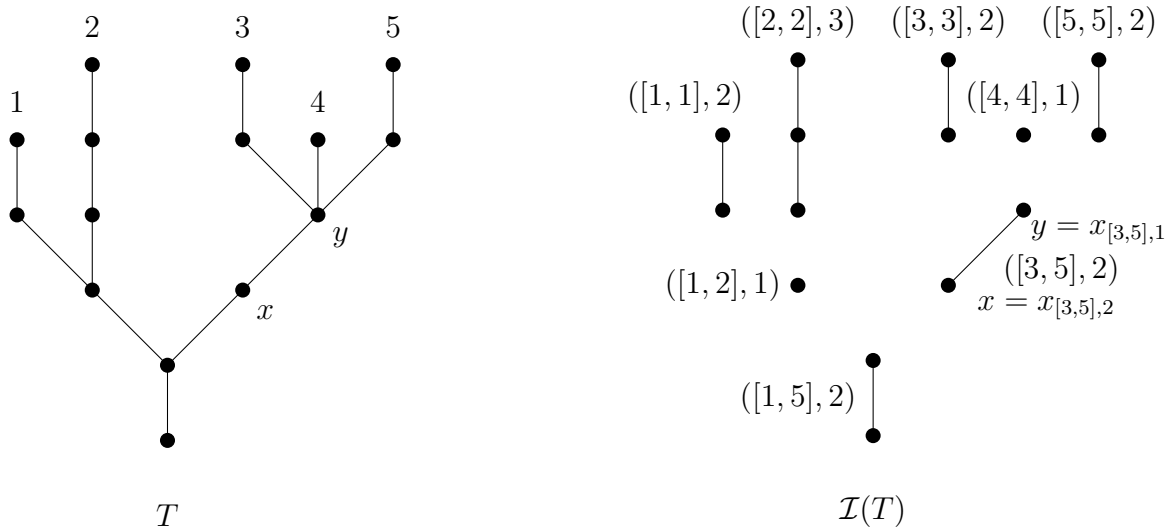

We will show that rowmotion orbits on antichains can be more easily viewed as certain tilings of a cylinder.  Given a rooted tree $T$ we will fix an embedding of the Hasse diagram of $T$ in the plane and label its leaves (maximal elements) as
$1,2,\ldots,n$ from left to right.  See the tree on the left of Figure~\ref{int:fig} for an example where $n=5$.  

For nonnegative integers $m,n$ we use interval notation
$$
[m,n]=\{m,m+1,\ldots,n\}
$$
and abbreviate $[n]=[1,n]$.  Associate with each vertex $x$ in $T$ the set of all labels of leaves $z$ such that $z\ge x$.   Note that by our choice of labeling, this set will be an interval $I$.  And the set of all $x$ with interval $I$ form a path called the {\em branch} corresponding to $I$ and denoted $B_I$. On the right in Figure~\ref{int:fig}, $T$ has been decomposed into branches with each labeled by a pair where the first component is the interval $I$ of $B_I$.  For example, nodes $x$ and $y$ are exactly the ones  below all three leaves $3,4,5$.  So their associated interval is $I=[3,5]$ and $B_{[3,5]}=\{x,y\}$.  
We will also label the vertices on the branch for $I$ as $x_{I,1},x_{I,2},\ldots$ starting with the maximal element and working down. 
Returning to our example,
$y=x_{[3,5],1}$ and $x=x_{[3,5],2}$.
Note the following two simple but important properties of this family of intervals.
\begin{enumerate}
    \item[(I1)]  The singleton intervals $[i,i]$ are in this family for all $i\in[n]$.
    \item[(I2)]  The family is {\em nested} in the sense that if $I,J$ are in the family with $\#I\le \#J$ then either $I\sbe J$ or
    $I\cap J=\emp$.
\end{enumerate}

Given an interval $I$, let 
$$
\be_I=\be_I(T)=\#B_I.
$$
Returning to our usual example, for $I=[3,5]$ we saw that $B_{[3,5]}=\{x,y\}$ which implies $\be_{[3,5]}=2$.   
A crucial tool in defining the tilings will be the set
$$
\cI(T) = \{(I,\be_I) \mid \text{$I$ is the interval of some branch of nodes in $T$}\}.
$$
On the right in Figure~\ref{int:fig}, the elements of $\cI(T)$ are displayed next to their corresponding branches.  We will abuse notation and write $I\in\cI(T)$ to mean that $(I,\be_I)\in\cI(T)$.

We will need to consider partitions of intervals.
A {\em partition} of an interval $I$ is a collection of nonempty subintervals $I_1,\ldots,I_k$ whose disjoint union is $I$.  We say that another partition $J_1,\ldots,J_l$ of $I$ is a {\em refinement} of the first if for every $J_j$ there is an $I_i$ with $J_j\sbe I_i$.
The refinement is {\em proper} if the two collections of subintervals are not the same.
Refinement is a partial order on partitions.  If all the intervals of the partition come from $\cI(T)$ then it is called an  {\em $\cI(T)$-partition}.

\begin{figure}
 \begin{tikzpicture}
\fill(2,0) circle(.1);
\fill(2,1) circle(.1);
\fill(1,2) circle(.1);
\fill(3,2) circle(.1);
\fill(0,3) circle(.1);
\fill(1,3) circle(.1);
\fill(4,3) circle(.1);
\fill(0,4) circle(.1);
\fill(1,4) circle(.1);
\fill(3,4) circle(.1);
\fill(4,4) circle(.1);
\fill(5,4) circle(.1);
\fill(1,5) circle(.1);
\fill(3,5) circle(.1);
\fill(5,5) circle(.1);
\draw (2,0)--(2,1)--(0,3)--(0,4) (1,2)--(1,5) (2,1)--(5,4)--(5,5) 
(4,4)--(4,3)--(3,4)--(3,5);
\draw(0,4.5) node{$1$};
\draw(1,5.5) node{$2$};
\draw(3,5.5) node{$3$};
\draw(4,4.5) node{$4$};
\draw(5,5.5) node{$5$};
\draw(-.5,4) node{$u$};
\draw(1.5,3) node{$v$};
\draw(3.3,1.7) node{$x$};
\draw(4.3,2.7) node{$y$};
\draw(2,-1) node{$T$};
 \end{tikzpicture}
\hs{20pt}
\begin{tikzpicture}
\draw(-.5,5.5) node{$1$};
\draw(-.5,4.5) node{$2$};
\draw(-.5,3.5) node{$3$};
\draw(-.5,2.5) node{$4$};
\draw(-.5,1.5) node{$5$};
\blr{(0,1)}{(1,4)}
\yer{(0,4)}{(1,5)}
\blr{(0,5)}{(1,6)}
\draw(.5,0) node{$\{u,x\}$};
\draw(2,3.5) node{$\stackrel{\rho}{\mapsto}$};
\begin{scope}[shift={(3,0)}]
\blr{(0,1)}{(1,4)}
\blr{(0,4)}{(1,5)}
\yer{(0,5)}{(1,6)}
\draw(.5,0) node{$\{v,y\}$};
\draw(2,3.5) node{$=$};
\end{scope}
\begin{scope}[shift={(6,0)}]
\blr{(0,1)}{(2,4)}
\yer{(0,4)}{(1,5)}
\blr{(0,5)}{(1,6)}
\blr{(1,4)}{(2,5)}
\yer{(1,5)}{(2,6)}
\end{scope}
\end{tikzpicture}
 
    \caption{Rowmotion on antichains in terms of tilings}
    \label{tile:fig}
\end{figure}
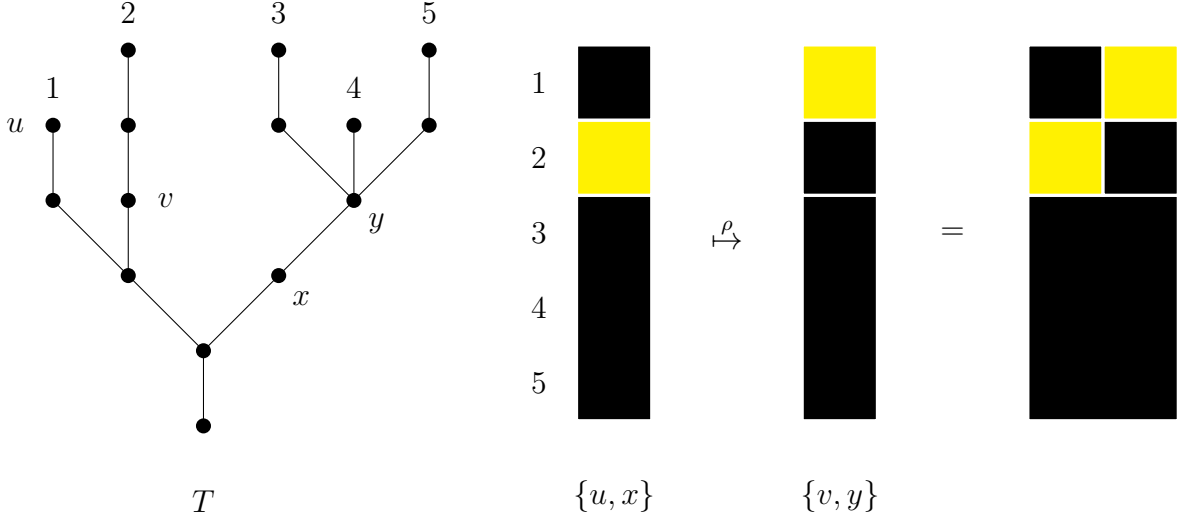

We now describe the procedure to produce a tiling $\tau$ from an orbit $\cO$ of rowmotion on antichains of a rooted tree $T$.  Consider a column of $n$ boxes where the $i$th box corresponds to the leaf labeled $i$ in the embedding of $T$.  The first column in Figure~\ref{tile:fig} is so labeled.  Given an antichain $A$, we take each $x\in A$ and consider the interval $I$ of its branch.  The boxes labeled by the elements of $I$ are then covered by a black tile.  All other boxes are covered by a single yellow tile.  Note that these boxes are exactly the ones in rows $i$ such that there is no element of $A$ below the leaf labeled $i$.  Returning to Figure~\ref{tile:fig}, consider the antichain $A=\{u,x\}$ and the leftmost column of tiles.  Since $u$ has interval $[1,1]$, the box in row $1$ gets a black tile.  Similarly, $x$'s interval is $[3,5]$ so the rows for this interval also receive a black tile.  The remaining square in row $2$ then receives a yellow tile.  The reader should now not find it hard to verify that $\rho(A)=\{v,y\}$ and that this antichain corresponds to the second column in the figure.  We now paste the columns for all antichains in the orbit $\cO$ together in the same order that they appear in the orbit to get a tiling $\tau=\tau(\cO)$ of a cylinder.  Note that when pasting, if there are two consecutive columns with black tiles coming from the same interval $I$ then these tiles are combined into one.  Returning to our perennial example, the two tiles for $I=[3,5]$ become one tile as seen in the final diagram.  And if there were more elements on the branch corresponding to $I$, they would fatten the tile further.  Three tilings corresponding to full orbits are shown in Figure~\ref{S(3,3,2)}.  The vertical sides of these rectangles are identified to make them into cylinders.  Also note that, in the middle tiling, a black tile in the second row stretches over this boundary as indicated by having it protrude beyond the sides of the rectangle. 

We wish to characterize the possible $\tau(\cO)$.  In the definition below, an $I\times b$ tile is a tile which covers the rows indexed by $I$ and $b$ columns.  Also, the maximal partitions used are maximal with respect to the refinement order.  They exist because property (I1) implies that any interval $I$ has a partition using intervals in $\cI(T)$ since all singletons are intervals.  And property (I2) guarantees that among all such partitions of $I$ there is a maximal one.

\begin{defn}
\label{tau:def}
Given a rooted tree $T$, an {\em $\cI(T)$-tiling} is a tiling of a cylinder using 
$I\times \be_I$ black tiles  and $I\times 1$ yellow tiles if $\#I=1$, satisfying the following two properties.
\begin{enumerate}
    \item[(t1)]
    An $I\times \be_I$ black tile is followed by a yellow tile if $\#I=1$, or by black tiles corresponding to the intervals in a maximal proper $\cI(T)$-partition of $I$ if $\#I\ge2$.
    \item[(t2)] If $J$ is a maximal interval of yellow tiles in a column, then they are followed by black tiles corresponding to the intervals in a maximal $\cI(T)$-partition of $J$.
\end{enumerate}
\end{defn}

\begin{thm}
Given a rooted tree, $T$, the map $\cO\mapsto \tau(\cO)$ is a bijection between the antichain rowmotion orbits of $T$ and the possible $\cI(T)$-tilings.
\end{thm}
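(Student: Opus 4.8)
The plan is to prove the theorem by constructing an explicit inverse and reducing the whole statement to a purely local correspondence between a single application of $\rho$ and the two tiling rules (t1) and (t2). First I would set up the dictionary between antichains and columns. Given an antichain $A$, the intervals $I_a$ of its elements are pairwise disjoint: if distinct $a,a'\in A$ had $I_a\cap I_{a'}\neq\emp$, then by the nestedness property (I2) one interval would contain the other; but in a rooted tree this forces $a,a'$ to be comparable (if $I_a\subsetneq I_{a'}$ then $a'<a$, while if $I_a=I_{a'}$ then $a,a'$ lie on the common branch $B_{I_a}$), contradicting that $A$ is an antichain. Hence the column of $A$ is a genuine tiling of the $n$ rows, with black tiles on the disjoint intervals $I_a$ and yellow singletons on the remaining rows, the yellow rows being exactly the leaves lying below no element of $A$. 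Conversely disjoint branch intervals always yield an antichain, so columns and antichains correspond once one also records, for each black tile, which vertex $x_{I,j}$ of $B_I$ is used.

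Next I would record the local behaviour of $\rho$ on a single branch $B_I$, written $x_{I,1}$ (top) down to $x_{I,\be_I}$ (bottom). A direct computation of $\rho(A)=\min\{x\notin A\da\}$ yields three cases. If $x_{I,j}\in A$ with $j\ge 2$, then $x_{I,j-1}\in\rho(A)$, so the element moves one step up the same branch and the black tiles in consecutive columns share the interval $I$ and are merged. If $x_{I,1}\in A$ and $\#I\ge 2$, then the upper covers of $x_{I,1}$ are exactly the bottom vertices of the sub-branches whose intervals form the maximal proper $\cI(T)$-partition of $I$ (unique by (I2)), and these lie in $\rho(A)$; this is (t1) for $\#I\ge2$. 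If $x_{I,1}\in A$ and $\#I=1$, then $x_{I,1}$ is a leaf with no cover, its row becomes yellow, and this is (t1) for $\#I=1$. In every case a black tile is entered at its bottom and traversed upward one row per column, so each black tile has width exactly $\be_I$, matching the shape demanded by Definition~\ref{tau:def}. For the yellow rows I would prove (t2): if $J$ is a maximal run of yellow rows in the column of $A$, then for each leaf $\ell\in J$ the unique minimal element of the complement on the root-to-$\ell$ path lies in $\rho(A)$, and these minimal elements are precisely the bottom vertices of the branches in the maximal $\cI(T)$-partition of $J$; the containments $I_m\sbe J$, the covering of $J$, and the impossibility of coarsening all follow from the yellow hypothesis (no element of $A$ below any leaf of $J$) together with (I2). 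Assembling these local statements shows the column of $\rho(A)$ is obtained from the column of $A$ exactly by the merging-and-tiling recipe, so $\tau(\cO)$ is a well-defined $\cI(T)$-tiling, and cyclicity of the orbit closes it into a cylinder.

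For bijectivity I would build the inverse directly. In an $\cI(T)$-tiling every black tile has width exactly $\be_I$ by definition, so within such a tile the columns, read left to right, correspond unambiguously to $x_{I,\be_I},x_{I,\be_I-1},\dots,x_{I,1}$; reading each column through this rule produces an antichain, which is genuine since the tiles are disjoint. This sends a tiling to the cyclic sequence of its columnar antichains. Running the local equivalences above in reverse, (t1) and (t2) guarantee that each column is $\rho$ of its predecessor, so the cyclic sequence is closed under $\rho$ and is a single rowmotion orbit, and the two constructions are visibly mutually inverse. The one bookkeeping point is that the circumference of the cylinder equals the orbit size rather than a proper multiple of it, which holds because $\rho$ permutes $\cA(T)$ and the recovered sequence first returns to its start after a full period.

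The main obstacle I anticipate is rule (t2). Unlike (t1), which is read off from the local tree structure at the single vertex $x_{I,1}$, verifying (t2) requires controlling all of $A\da$ beneath an entire yellow block $J$ at once: one must show simultaneously that the minimal complement elements sit at branch bottoms, that their intervals are contained in and cover $J$, and that the resulting $\cI(T)$-partition cannot be coarsened, so that it is the maximal one. Each ingredient combines the yellow hypothesis with the nestedness property (I2), and organizing them cleanly, rather than the width-exactly-$\be_I$ claim or the period bookkeeping (which are comparatively routine), is where the real work lies.
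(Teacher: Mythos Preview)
Your proposal is correct and follows essentially the same route as the paper: verify well-definedness by the local case analysis of $\rho$ on a branch (non-maximal element moves up; maximal element with $\#I\ge2$ splits into the maximal proper $\cI(T)$-partition; leaf becomes yellow), handle (t2) by locating the minimal complement elements under a yellow block, and then construct the inverse by reading each column of a black $I$-tile as the corresponding vertex on $B_I$. Your extra bookkeeping (disjointness of the $I_a$, the period-versus-circumference remark) is sound and fills in details the paper leaves implicit, but the architecture is the same.
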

\begin{proof}
We must first show that this map is well defined in that $\tau=\tau(\cO)$ has tiles satisfying (t1) and (t2) and of the correct shape.   We will do this by studying how rowmotion affects various elements of $T$. 

Consider $A\in\cO$ and any $x\in A$ which is not maximal in its branch and let $I$ be the associated interval.
Then there is a unique element $y$ which covers $x$ and it is in the same branch.
Furthermore $y\in\rho(A)$.  Since $x$ and $y$ correspond to the same interval $I$, it follows that the tile covering those rows in the column for $A$
 extends into the column for $\rho(A)$.  By induction, this tile extends into a column for an antichain containing the maximal element on the branch.
 
Now suppose that $x\in A$ is maximal in its branch.  If $\#I=1$ then $v$ is maximal in $T$.  So in $\rho(A)$ the branch will be empty and the algorithm will place a yellow tile in the corresponding row and column.  This proves the first case in (t1).
On the other hand, if $\#I\ge2$ then $x$ is covered by at least two elements 
$y_1,\ldots, y_k$.  So the column for $\rho(A)$ will contain tiles in the corresponding intervals $I_1,\ldots,I_k$ which is a proper $\cI(T)$-partition of $I$ since $k\ge2$.  And it is maximal since if there is some $J\in\cI(T)$ containing two or more of the $I_i$ then there would have to be at least one element between $x$ and the corresponding $y_i$'s.  This completes the proof of (t1).

For (t2), we will assume for simplicity that $1,n\not\in J$ where $n$ is the number of leaves of $T$.  The cases when $J$ contains one or both of these special values is similar.  Say $J=[m,n]$.  Then by our assumption, there are black tiles covering rows $m-1$ and $n+1$ in the column for $J$.  Let $x$ and $y$ be the corresponding elements of $A$.  Removing the $\zh$--$x$ and $\zh$--$y$ paths from $T$ breaks the lower order ideal generated by the leaves in $J$ into rooted subtrees.   Let $z_1,\ldots, z_k$ be their roots with corresponding intervals $I_1,\ldots,I_k$.  Then  $\rho(A)$  contains these $z_i$ and so its column contains tiles for the intervals $I_i$ which form a partition of $J$.  Maximality is obtained by the same argument as in the previous paragraph.

To complete showing that $\tau$ is well defined, we must check the shape of the tiles.  Yellow tiles are of the correct shape by definition of the algorithm.  As far as the black tiles, they cover rows indexed by intervals in $\cI(T)$ by definition.  So it suffices to show that a tile in the rows indexed by $I$ has the correct length.  From the previous two paragraphs we see that the tiles in the partitions following the maximal element of a black tile or following an interval of yellow tiles all begin with the minimal elements of their respective branches.  And by the second paragraph, such a tile will extend to the maximal element on its branch.  So the tile will have length $\be_I$, the length of the branch.

To show that this map is a bijection, we construct its inverse.  So given an $\cI(T)$-tiling $\tau$, we must construct a corresponding orbit $\cO$.  For each column of $\tau$ we form an antichain $A$ as follows.  For each interval $I$ covered by a black tile, suppose the give column is the $i$th in that tile.  Then add the $i$th smallest element on the branch for $I$ to $A$.  
Now arrange the antichains in the same order as the columns of the tiling to get an orbit.
The demonstration that this map is well defined is similar to the one just given.  And the two functions are inverses since the algorithms described are step-by-step reversals.  This completes the proof.
\end{proof}

We will often call the tiles of shape $I\times\be_I$ simply {\em $I$-tiles}.
As a first application of the tiling model, we will use it to compute various statistics on rowmotion orbits.  It will also give us a simple proof of our first homomesy.  Given $x\in T$ we have the statistic on antichains $A\in\cA(T)$ given by
$$
\chi_x(A) = \case{1}{if $x\in A$,}{0}{if $x\not\in A$.}
$$
If we want to count the size of antichains we use the statistic
$$
\chi(A) = \sum_{x\in T}\chi_x(A) = \#A.
$$
The corresponding statistics for ideals are denoted
$\chih_x$ and $\chih$.  Given a $\cI(T)$-tiling $\tau$ we will use the notation
$$
m_I = m_I(\tau) = \text{number of $I$-tiles in $\tau$.}
$$
\begin{cor}
\label{I(T)}
Let $T$ be a rooted tree and $\tau$ be a $\cI(T)$-tiling corresponding to a rowmotion orbit $\cO$ on $T$.  The following hold.
\begin{enumerate}
    \item[(a)]  If $x\in T$ has interval $I$ then
    $$
    \chi_x(\cO) = m_I.
    $$
    \item[(b)] We have
    $$
    \chi(\cO) = \sum_{I\in\cI(T)} \be_I m_I.
    $$
    \item[(c)] If $x=x_{I,j}$ then
    $$
    \chih_x(\cO) = j\cdot m_I + c_I
    $$
    where $c_I$ is the number of columns of $\tau$ intersecting a $J$-tile for $J\subset I$.
    \item[(d)]  We have
    $$
    \chih(\cO) = \sum_{I\in\cI(T)}
    \left[\binom{\be_I+1}{2} m_I + \be_I c_I \right]
    $$
    \item[(e)]  If $x,y$ are in the same branch then $\chi_x-\chi_y$ is $0$-mesic.
\end{enumerate}
\end{cor}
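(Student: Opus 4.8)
The plan is to reduce each part to the tiling bijection of the preceding theorem, together with the single observation that the $\be_I$ columns of an $I$-tile record, read left to right, the branch elements $x_{I,\be_I},\ldots,x_{I,1}$ from bottom to top. First I would prove (a): by the extension convention $\chi_x(\cO)=\#\{A\in\cO : x\in A\}$, and $x=x_{I,j}$ lies in $A$ precisely when the column of $A$ occupies the unique slot of an $I$-tile that records $x_{I,j}$; since each $I$-tile has exactly one such slot, this count equals the number $m_I$ of $I$-tiles. Part (b) is then immediate from $\chi=\sum_{x\in T}\chi_x$: apply (a) and group the $\be_I$ elements sharing the interval $I$ to obtain $\sum_{I}\be_I m_I$. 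Likewise (e) is a one-line consequence of (a): if $x,y$ lie on a common branch they have the same interval $I$, so $\chi_x(\cO)=m_I=\chi_y(\cO)$ for every orbit $\cO$, whence $(\chi_x-\chi_y)(\cO)=0$ and the statistic is $0$-mesic.

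The substantive part is (c). Here I would first note that $A\mapsto A\da$ intertwines $\rho$ and $\rhoh$, since $\max(A\da)=A$ gives $\rhoh(A\da)=\rho(A)\da$; thus $\chih_x(\cO)$ counts the columns $A$ of $\tau$ with $x\in A\da$, i.e. those in which some element of $A$ lies at or above $x=x_{I,j}$. The elements $\ge x$ fall into two disjoint families: the elements $x_{I,1},\ldots,x_{I,j}$ lying on $B_I$ itself, and the elements lying on a branch $B_J$ with $J\subsetneq I$, every such branch sitting above the top of $B_I$ by the nesting property (I2). A column meets the first family in exactly the rightmost $j$ of the $\be_I$ slots of each $I$-tile, namely those recording $x_{I,1},\ldots,x_{I,j}$, contributing $j\,m_I$ in all; and it meets the second family exactly when it intersects some $J$-tile with $J\subsetneq I$, contributing $c_I$. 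Adding the two contributions yields $\chih_x(\cO)=j\,m_I+c_I$.

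Part (d) then follows from (c) by the same bookkeeping used to pass from (a) to (b): summing $j\,m_I+c_I$ over $x=x_{I,j}$ with $j=1,\ldots,\be_I$ turns the running $j$ into $\binom{\be_I+1}{2}m_I$ and repeats the constant $c_I$ exactly $\be_I$ times, and collecting over $I\in\cI(T)$ gives the displayed formula. I expect the main obstacle to be the case analysis underlying (c). One must verify both disjointness and exhaustiveness: a column carrying an $I$-tile carries no $J$-tile with $J\subseteq I$, so the two families never co-occur in one column; a column carrying a larger $K$-tile with $K\supsetneq I$ records a branch element strictly below $x$ and hence has $x\notin A\da$, so it is counted by neither term; and no element $\ge x$ lies outside the two families. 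One must also keep the indexing convention straight, since $x_{I,1}$ is the maximal element of the branch while the leftmost column of a tile records the minimal one; it is precisely this reversal that makes ``at or above $x$'' correspond to the rightmost $j$ slots of an $I$-tile. Once these disjointness and exhaustiveness checks are settled, the remaining steps are routine summations.
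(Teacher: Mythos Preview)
Your proposal is correct and follows essentially the same route as the paper: part (a) from the fact that each branch element occupies one column per $I$-tile, (b) and (d) by summing, (e) as an immediate corollary of (a), and (c) by splitting the elements $\ge x$ into those on $B_I$ itself versus those on some $B_J$ with $J\subsetneq I$. Your treatment of (c) is in fact more careful than the paper's, which leaves the disjointness and exhaustiveness checks (and the handling of columns covered by a $K$-tile with $K\supsetneq I$ or by yellow tiles) implicit.
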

\begin{proof}
(a)  This follows from the fact that $v$ is represented by a single column in each $I$-tile of $\tau$.

\medskip

(b)  Since $I$-tiles have length $\be_I=\#B_I$ we get by summing (a)
\begin{align*}
\chi(\cO) &= \sum_{x\in T}     \chi_x(\cO)\\
&=\sum_{I\in\cI(T)} \sum_{x\in B_I} m_I\\
&=\sum_{I\in\cI(T)} \be_I m_I.
\end{align*}

\medskip

(c)  For a lower order ideal $L$ we have that $x\in L$ if and only if $x\le y$ for some $y\in A$ where  $A=\max L$.  Note also that if $y$ has interval $J$ then $y\ge x$ implies $J\subseteq I$.  
If $J=I$ then there are $j$ choices for $y$ and so $j\cdot m_I$ counts the total number of columns containing such an element.  And  $c_I$ accounts for the columns intersection some $J$-tile where 
$J\subset I$.

\medskip

(d)  This result follows from (c) in much the same way that (b) followed from (a).  So the proof is left to the reader.

\medskip

(e)  Let the common branch be $B_I$.  Using (a) one last time we get
$$
\chi_x(\cO)-\chi_y(\cO) = m_I - m_I = 0
$$
which implies the homomesy.
\end{proof}

We end this section with a recursive formula for the number of antichains in a rooted tree $T$ which will be useful in the sequel.  
We use $T\setm\{\zh\}$ for the forest of rooted trees obtained by removing $\zh$ from $T$.
\begin{lem}
\label{num:A}
Let $T$ be a rooted tree.  If $\#T=1$ then $\#\cA(T)=2$.  If $\#T\ge2$ then let $T_1,\ldots,T_k$ be the rooted tree components of $T\setm\{\zh\}$.  In this case
$$
\#\cA(T) = 1 + \prod_{i=1}^k \#\cA(T_i).
$$
\end{lem}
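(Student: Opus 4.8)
The plan is to count antichains of $T$ by conditioning on whether the root $\zh$ belongs to the antichain, and then to exploit the fact that deleting $\zh$ breaks $T$ into independent pieces. First I would dispose of the base case: when $\#T=1$ the only element is $\zh$, so the antichains of $T$ are $\emp$ and $\{\zh\}$, giving $\#\cA(T)=2$ as claimed.

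Now assume $\#T\ge2$, and let $T_1,\ldots,T_k$ be the components of $T\setm\{\zh\}$, whose roots are exactly the elements covering $\zh$. I would partition $\cA(T)$ according to whether $\zh\in A$. Since $\zh$ is the unique minimal element of $T$, it lies below every other element and hence is comparable to all of them; therefore the only antichain containing $\zh$ is $\{\zh\}$ itself, and this contributes the summand $1$ in the formula. For the antichains $A$ with $\zh\notin A$, I would show that $A$ is equivalent to an independent choice of one antichain $A_i=A\cap T_i$ in each component $T_i$. This produces a bijection between such $A$ and the tuples $(A_1,\ldots,A_k)\in\cA(T_1)\times\cdots\times\cA(T_k)$, accounting for the factor $\prod_{i=1}^k\#\cA(T_i)$, and summing the two cases yields the result.

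The crux — and the only step that is not pure bookkeeping — is verifying that elements lying in distinct components $T_i$ and $T_j$ are incomparable in $T$, since only then does a subset of $T\setm\{\zh\}$ qualify as an antichain precisely when each of its traces $A\cap T_i$ is an antichain of $T_i$, with no cross-component constraints. This is exactly where the rooted-tree hypothesis enters: in a tree, $x\le y$ forces $x$ to lie on the unique $\zh$--$y$ path, and for $y\in T_j$ that path meets $T_i$ only at $\zh$; since $x\in T_i$ and $x\ne\zh$, neither $x\le y$ nor $y\le x$ can hold. Once this incomparability is established the decomposition follows immediately, so I expect the proof to be short, with all of the content concentrated in this single observation about paths in the Hasse diagram.
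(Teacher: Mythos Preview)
Your proposal is correct and follows essentially the same argument as the paper: split $\cA(T)$ into the single antichain $\{\zh\}$ and the antichains contained in $\biguplus_i T_i$, then observe that the latter are exactly tuples $(A_1,\ldots,A_k)$ with $A_i\in\cA(T_i)$. Your write-up is in fact more careful than the paper's, which simply asserts the product decomposition without spelling out the incomparability of elements from distinct components.
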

\begin{proof}
If $\#T=1$ then $T$ has antichains $\emp$ and $\{\zh\}$.
When $\#T\ge2$, let $A$ be an antichain of $T$.  Either $A=\{\zh\}$, corresponding to the $1$ is the sum, or $A\sbe \uplus_i T_i$.  In the latter case the restriction $A_i$ of $A$ to $T_i$ is an antichain and the product counts the possible $A_i$.
\end{proof}

\section{Stars}
\label{s}

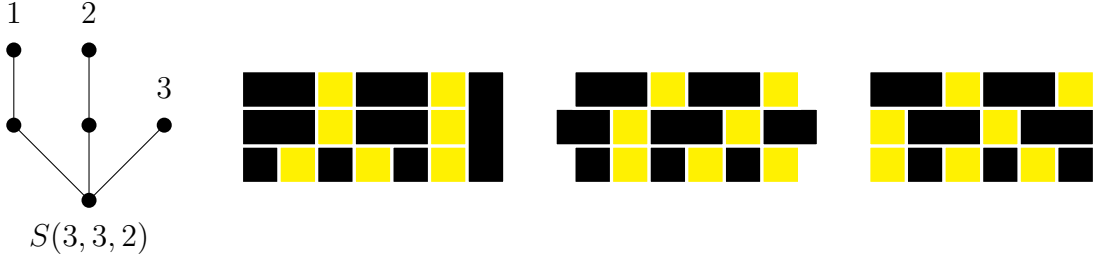
\begin{figure}
 \begin{tikzpicture}
\fill(1,0) circle(.1);
\fill(0,1) circle(.1);
\fill(1,1) circle(.1);
\fill(2,1) circle(.1);
\fill(0,2) circle(.1);
\fill(1,2) circle(.1);
\draw (0,2)--(0,1)--(1,0)--(1,2) (1,0)--(2,1);
\draw(0,2.5) node{$1$};
\draw(1,2.5) node{$2$};
\draw(2,1.5) node{$3$};
\draw(1,-.5) node{$S(3,3,2)$};
 \end{tikzpicture}
\hs{10pt}
\raisebox{30pt}{
 \begin{tikzpicture}[scale=.5]
 \blr{(0,0)}{(1,1)}
 \yer{(1,0)}{(2,1)}
  \blr{(2,0)}{(3,1)}
 \yer{(3,0)}{(4,1)}
  \blr{(4,0)}{(5,1)}
 \yer{(5,0)}{(6,1)}
 \blr{(0,1)}{(2,2)}
 \yer{(2,1)}{(3,2)}
  \blr{(3,1)}{(5,2)}
  \yer{(5,1)}{(6,2)}
 \blr{(0,2)}{(2,3)}
 \yer{(2,2)}{(3,3)}
  \blr{(3,2)}{(5,3)}
  \yer{(5,2)}{(6,3)}
 \blr{(6,0)}{(7,3)}
 \end{tikzpicture}
 \hs{10pt}
 \begin{tikzpicture}[scale=.5]
 \blr{(0,0)}{(1,1)}
 \yer{(1,0)}{(2,1)}
  \blr{(2,0)}{(3,1)}
 \yer{(3,0)}{(4,1)}
  \blr{(4,0)}{(5,1)}
 \yer{(5,0)}{(6,1)}
 \blr{(-.5,1)}{(1,2)}
 \yer{(1,1)}{(2,2)}
  \blr{(2,1)}{(4,2)}
  \yer{(4,1)}{(5,2)}
 \blr{(5,1)}{(6.5,2)}  
 \blr{(0,2)}{(2,3)}
 \yer{(2,2)}{(3,3)}
  \blr{(3,2)}{(5,3)}
  \yer{(5,2)}{(6,3)}
 \end{tikzpicture}
 \hs{10pt}
 \begin{tikzpicture}[scale=.5]
 \yer{(0,0)}{(1,1)}
 \blr{(1,0)}{(2,1)}
  \yer{(2,0)}{(3,1)}
 \blr{(3,0)}{(4,1)}
  \yer{(4,0)}{(5,1)}
 \blr{(5,0)}{(6,1)}
\yer{(0,1)}{(1,2)}
 \blr{(1,1)}{(3,2)}
 \yer{(3,1)}{(4,2)}
  \blr{(4,1)}{(6,2)}
 \blr{(0,2)}{(2,3)}
 \yer{(2,2)}{(3,3)}
  \blr{(3,2)}{(5,3)}
  \yer{(5,2)}{(6,3)}
 \end{tikzpicture}
 }
    \caption{The star $S(3,3,2)$ and its tilings}
    \label{S(3,3,2)}
\end{figure}

A {\em star}, $S$, is a rooted tree with $n$ leaves and
$$
\cI(S) = \{([1,1],\be_1),\ \ldots,\ ([n,n],\be_n),\ ([n],1)\}
$$
where we are using the abbreviation $\be_i=\be_{[i,i]}$.  
We will use the same abbreviation for other notation involving a subscript $[i,i]$, for example $x_{i,j} = x_{[i,i],j}$.
So $S$ is the result of taking $n$ chains of length $\be_1,\ldots,\be_n$ and identifying their minimal elements.  Note that all tiles in a corresponding tiling will only cover one row, except for the tile corresponding to $\zh$.  We denote this star by $S(\al_1,\ldots,\al_n)$ where $\al_i=\be_i+1$ for $i\in[n]$.  The reason for this change of variables is because it will make our results easier to state since $\al_i$ is the length of a black tile followed by a yellow tile in row $i$.
The star $S(3,3,2)$ and its tilings are found in Figure~\ref{S(3,3,2)}.
Given an orbit $\cO$ we will use the notation
$$
\de=\case{1}{if $\zh\in\cO$,}{0}{if $\zh\not\in\cO$.}
$$
\begin{thm}
\label{star:thm}
Consider the star $S=S(\al_1,\ldots,\al_n)$ and an orbit $\cO$ of rowmotion on $S$.  Let $l=\lcm(\al_1,\ldots,\al_n)$.  
\begin{enumerate}
\item[(a)] We have 
\[
\#\cO=l+\de
\]
and the number of orbits is $\al_1\cdots\al_n/l$.
    \item[(b)] For any $x\in S$,
    $$\chi_x(\cO)=\begin{cases} 
      l/\alpha_i & \text{if $x\in B_i$},\\
     \delta & \text{if $x=\hat{0}$}.
   \end{cases}$$
    \item[(c)] We have
    $$
       \chi(\cO)=\delta+\sum_{i=1}^n \frac{l}{\alpha_i}(\alpha_i-1).
    $$
    Thus $\chi$ is homometric but not homomesic.
    \item[(d)] For any $x\in S$
   $$
   \chih_x(\cO)=\case{jl/\alpha_i}{if $x=x_{i,j}$}{l}{if $x=\hat{0}$.}
     $$
     \item[(e)]  We have
     $$
        \hat{\chi}(\cO)= l+\sum_{i=1}^n \frac{l}{\alpha_i}\binom{\alpha_i}{2}.
     $$
     Thus $\chih$ is homometric but not homomesic.
\end{enumerate}
\end{thm}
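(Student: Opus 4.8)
The plan is to deduce (e) from (d) by summing $\chih_x$ over all vertices of $S$, in exactly the way the corollary obtained (d) from (c) and (b) from (a). The vertices of $S$ are the root $\zh$ together with the elements $x_{i,j}$ for $i\in[n]$ and $1\le j\le\be_i$, where $\be_i=\al_i-1$ is the number of non-root vertices on branch $i$. Since $\chih(\cO)=\sum_{x\in S}\chih_x(\cO)$, I would first split this sum into the root term and the per-branch terms, and then substitute the closed forms from (d).

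Carrying this out, the root contributes $\chih_{\zh}(\cO)=l$, and branch $i$ contributes $\sum_{j=1}^{\be_i} jl/\al_i=(l/\al_i)\binom{\be_i+1}{2}$. Using $\be_i+1=\al_i$ this last term is $(l/\al_i)\binom{\al_i}{2}$, and summing over $i$ yields
\[
\chih(\cO)=l+\sum_{i=1}^{n}\frac{l}{\al_i}\binom{\al_i}{2}.
\]
This step is purely routine: the only identity needed is the triangular-number formula $\sum_{j=1}^{\be_i} j=\binom{\be_i+1}{2}$, so I would not belabor it.

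For the homometry and non-homomesy claims the decisive observation is that the right-hand side above depends only on the star $S$, through the $\al_i$ and $l=\lcm(\al_1,\dots,\al_n)$, and not on the orbit $\cO$. Hence $\chih(\cO)$ is actually the same constant on \emph{every} orbit, which trivially gives homometry, since any two orbits of equal size share this common value. For non-homomesy I would appeal to part (a): the unique orbit whose antichain $\{\zh\}$ contains the root has size $l+1$, while every other orbit has size $l$. With the numerator $\chih(\cO)$ a fixed positive constant and the denominator $\#\cO$ taking the two distinct values $l$ and $l+1$, the average $\chih(\cO)/\#\cO$ cannot be constant, so $\chih$ fails to be homomesic.

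The only point requiring care is that the non-homomesy argument needs at least two orbits of differing sizes, i.e.\ that some orbit omits $\zh$; by the orbit count $\al_1\cdots\al_n/l$ from (a) this holds precisely when $\al_1\cdots\al_n>l$, the generic situation, and in particular for the running example $S(3,3,2)$, whose three orbits have sizes $7,6,6$ and common value $\chih(\cO)=21$, giving averages $3$ and $7/2$. I anticipate no genuine obstacle: the substance of (e) is the bookkeeping in the sum, together with the contrast between a constant numerator and a variable orbit size.
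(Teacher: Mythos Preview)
Your proof of part (e) is correct and essentially parallels the paper's, with one minor difference in packaging: the paper computes $\chih(\cO)$ by plugging into the general formula $\chih(\cO)=\sum_{I\in\cI(T)}\bigl[\binom{\be_I+1}{2}m_I+\be_I c_I\bigr]$ from Corollary~\ref{I(T)}(d) and evaluating the $I=[n]$ and $I=[i,i]$ terms separately, whereas you sum the per-vertex values of part~(d) directly. Since Corollary~\ref{I(T)}(d) was itself obtained by summing Corollary~\ref{I(T)}(c), and part~(d) here is an instance of that corollary, the two computations are really the same summation carried out in a different order; your version is arguably more transparent for the star.

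You are also more explicit than the paper about the homometry and non-homomesy claims, and your caveat is well taken: the non-homomesy argument requires at least one orbit with $\de=0$, i.e.\ $\al_1\cdots\al_n>l$. The paper leaves this implicit, so your treatment is if anything more careful. Your worked example $S(3,3,2)$ with common value $21$ and averages $3$ versus $7/2$ is correct.
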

\begin{proof}
(a)  Consider the tiling $\tau=\tau(\cO)$.  For all $i\in[n]$ the corresponding interval $I=[i,i]$ has $\#I=1$.  So, by condition (t1) in Definition~\ref{tau:def}, each black tile in that row is followed by a yellow tile.  And this pair of tiles has length $\be_i+1=\al_i$.

Now consider the case when $\zh\not\in\cO$.  So no tile spans more than one row.  Now the previous paragraph and (t2) imply
that the black and yellow tiles alternate in row $i$.  So the length of that row is divisible by $\al_i$.  Since this is true for all $i$ we must have that $l$ divides $\#\cO$.  But since $l$ is the least common multiple, a given column will recur after $l$ steps.  So we must have $\#\cO=l$.  When $\zh\in\cO$ then the same reasoning as above applies to the tiling once the column for $\zh$ is removed.  So in this case $\#\cO=l+1$.

Now let $k$ be the number of orbits.  From what we have just proved,  $\#\cA(S)=1+kl$.  Also, it follows easily from Lemma~\ref{num:A} that $\#\cA(S)=1+\al_1\cdots\al_n$.  Equating the two expressions results in the desired count.

\medskip

(b) We will consider the case $x\in B_i$ as the other is trivial.  Consider the tiling $\tau=\tau(\cO)$.  From the proof of (a), we see that row $i$ has $l$ columns which are tiled by a pair of consecutive black and yellow tiles of combined length $\al_i$.  So the number of black tiles in that row is
\begin{equation}
\label{l/al_i}
  m_i = l/\al_i.  
\end{equation}
We are now done by Corollary~\ref{I(T)} (a).

\medskip

(c)  Using part (b) and Corollary~\ref{I(T)} (b) we obtain
$$
\chi(\cO) = \be_{[n]} m_{[n]} + \sum_{i=1}^n \be_i m_i
=\delta+\sum_{i=1}^n \frac{l}{\alpha_i}(\alpha_i-1).
$$

\medskip

(d)  Again, this is easy to see if $x=\zh$.  If $x=x_{i,j}$ then there is no $J\subset [i,i]$ in $\cI(S)$.  So by Corollary~\ref{I(T)}  (c) and equation~\eqref{l/al_i}
$$
\chih_x(\cO) = j\cdot m_i = j l/\al_i.
$$

\medskip

(e)  It suffices to calculate the terms in the sum of Corollary~\ref{I(T)} (d).  We will do the case when $\zh\not\in\cO$ as the unique orbit when $\zh\in\cO$ is done similarly.  We first look at the term for $I=[n]$.  In this case $\be_{[n]}=1$ and $m_{[n]}=0$ by the choice of $\cO$.  Since $[i,i]\subset[n]$ for all $i$ and there is no column for the empty antichain we have $c_{[n]}=l$, the number of columns of the tiling.  So the term for $I=[n]$ reduces to $l$.  Now consider the summand for $[i,i]$.  We have $\be_i+1=\al_i$ and $m_i=l/\al_i$ by equation~\eqref{l/al_i}.  Furthermore, there is no $J\subset [i,i]$ so $c_i=0$.  Thus the term for $I=[i,i]$ is  the $i$th one in the sum given in (e), as desired.
\end{proof}

Stars exhibit a number of homomesies.  The following results are all gotten by simple manipulation of the formulas for $\chi$ and $\chih$ in the previous theorem, so we suppress the demonstration.
\begin{cor}
\label{start:hom}
Consider the star $S=S(\al_1,\ldots,\al_n)$.
\begin{enumerate}
    \item[(a)] If $x\in B_i$, then $\alpha_i\chi_x+\chi_{\hat{0}}$ is $1$-mesic.
    \item[(b)] If $x\in B_i$ and $y\in B_j$  then $\alpha_{i}\chi_{x}-\alpha_{j}\chi_{y}$ is $0$-mesic. 
    \item[(c)] If $x=x_{i,k}$ then $\al_i\chih_x - k\chih_{\zh}$ is $0$-mesic.
    \item[(d)]  If $x=x_{i,k}$ and $y=x_{j,k}$, then $\al_i\chih_x-\al_j\chih_y$ is $0$-mesic.\hqed
\end{enumerate}
\end{cor}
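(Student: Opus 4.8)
The plan is to reduce each claim to the defining identity of homomesy: a statistic $\st$ is $c$-mesic exactly when $\st\cO = c\cdot\#\cO$ for every orbit $\cO$. In particular a statistic is $0$-mesic iff it sums to zero on each orbit, and $1$-mesic iff it sums to $\#\cO$ on each orbit. With this reformulation every part of the corollary becomes a one-line substitution of the closed forms recorded in Theorem~\ref{star:thm}, the point being that the orbit-dependent quantity $l=\lcm(\al_1,\ldots,\al_n)$ cancels in each case. Since each combination in the corollary is a linear combination of the statistics $\chi_x,\chi_{\zh},\chih_x,\chih_{\zh}$, its value on an orbit is the corresponding linear combination of the per-orbit values, so I can substitute term by term.

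First I would dispose of the $\chi$-statistics. For part (b), if $x\in B_i$ and $y\in B_j$ then Theorem~\ref{star:thm}(b) gives $\chi_x(\cO)=l/\al_i$ and $\chi_y(\cO)=l/\al_j$, so
\[
(\al_i\chi_x-\al_j\chi_y)(\cO)=\al_i\cdot\frac{l}{\al_i}-\al_j\cdot\frac{l}{\al_j}=l-l=0,
\]
giving $0$-mesy. For part (a), the same formula together with $\chi_{\zh}(\cO)=\de$ yields
\[
(\al_i\chi_x+\chi_{\zh})(\cO)=\al_i\cdot\frac{l}{\al_i}+\de=l+\de=\#\cO,
\]
where the last equality is Theorem~\ref{star:thm}(a); dividing by $\#\cO$ gives the constant $1$, so this combination is $1$-mesic.

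The two $\chih$-statistics are handled identically, now invoking Theorem~\ref{star:thm}(d), which gives $\chih_x(\cO)=kl/\al_i$ when $x=x_{i,k}$ and $\chih_{\zh}(\cO)=l$. For part (c),
\[
(\al_i\chih_x-k\chih_{\zh})(\cO)=\al_i\cdot\frac{kl}{\al_i}-k\cdot l=kl-kl=0,
\]
and for part (d), with $x=x_{i,k}$ and $y=x_{j,k}$,
\[
(\al_i\chih_x-\al_j\chih_y)(\cO)=\al_i\cdot\frac{kl}{\al_i}-\al_j\cdot\frac{kl}{\al_j}=kl-kl=0,
\]
so both are $0$-mesic.

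Because the computations are pure substitution, there is no genuine obstacle; the only point demanding care is part (a), where the mesic constant is produced by the root contribution. One must check that the coefficient on $\chi_{\zh}$ is exactly $1$ so that the $\de$ appearing in $\chi_{\zh}(\cO)$ recombines with $l$ to give $\#\cO=l+\de$ rather than an orbit-dependent value; this is precisely why $1$-mesy (and not $0$-mesy) occurs here while every other part cancels to $0$.
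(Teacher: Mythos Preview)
Your proof is correct and is exactly the approach the paper intends: the authors state that the corollary follows by ``simple manipulation of the formulas for $\chi$ and $\chih$ in the previous theorem'' and suppress the details, and you have supplied precisely those substitutions from Theorem~\ref{star:thm}(a),(b),(d). There is nothing to add.
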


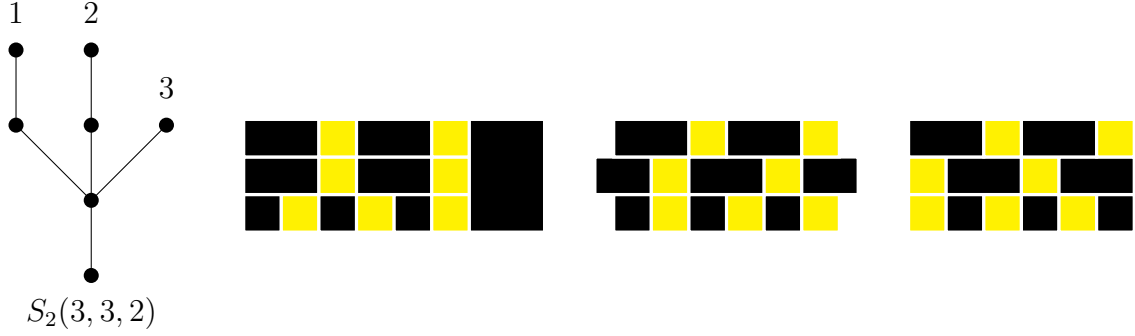
\begin{figure}
 \begin{tikzpicture}
 \fill(1,-1) circle(.1);
\fill(1,0) circle(.1);
\fill(0,1) circle(.1);
\fill(1,1) circle(.1);
\fill(2,1) circle(.1);
\fill(0,2) circle(.1);
\fill(1,2) circle(.1);
\draw (0,2)--(0,1)--(1,0)--(1,2) (1,-1)--(1,0)--(2,1);
\draw(0,2.5) node{$1$};
\draw(1,2.5) node{$2$};
\draw(2,1.5) node{$3$};
\draw(1,-1.5) node{$S_2(3,3,2)$};
 \end{tikzpicture}
\hs{10pt}
\raisebox{40pt}{
 \begin{tikzpicture}[scale=.5]
 \blr{(0,0)}{(1,1)}
 \yer{(1,0)}{(2,1)}
  \blr{(2,0)}{(3,1)}
 \yer{(3,0)}{(4,1)}
  \blr{(4,0)}{(5,1)}
 \yer{(5,0)}{(6,1)}
 \blr{(0,1)}{(2,2)}
 \yer{(2,1)}{(3,2)}
  \blr{(3,1)}{(5,2)}
  \yer{(5,1)}{(6,2)}
 \blr{(0,2)}{(2,3)}
 \yer{(2,2)}{(3,3)}
  \blr{(3,2)}{(5,3)}
  \yer{(5,2)}{(6,3)}
 \blr{(6,0)}{(8,3)}
 \end{tikzpicture}
 \hs{10pt}
 \begin{tikzpicture}[scale=.5]
 \blr{(0,0)}{(1,1)}
 \yer{(1,0)}{(2,1)}
  \blr{(2,0)}{(3,1)}
 \yer{(3,0)}{(4,1)}
  \blr{(4,0)}{(5,1)}
 \yer{(5,0)}{(6,1)}
 \blr{(-.5,1)}{(1,2)}
 \yer{(1,1)}{(2,2)}
  \blr{(2,1)}{(4,2)}
  \yer{(4,1)}{(5,2)}
 \blr{(5,1)}{(6.5,2)}  
 \blr{(0,2)}{(2,3)}
 \yer{(2,2)}{(3,3)}
  \blr{(3,2)}{(5,3)}
  \yer{(5,2)}{(6,3)}
 \end{tikzpicture}
 \hs{10pt}
 \begin{tikzpicture}[scale=.5]
 \yer{(0,0)}{(1,1)}
 \blr{(1,0)}{(2,1)}
  \yer{(2,0)}{(3,1)}
 \blr{(3,0)}{(4,1)}
  \yer{(4,0)}{(5,1)}
 \blr{(5,0)}{(6,1)}
\yer{(0,1)}{(1,2)}
 \blr{(1,1)}{(3,2)}
 \yer{(3,1)}{(4,2)}
  \blr{(4,1)}{(6,2)}
 \blr{(0,2)}{(2,3)}
 \yer{(2,2)}{(3,3)}
  \blr{(3,2)}{(5,3)}
  \yer{(5,2)}{(6,3)}
 \end{tikzpicture}
 }
    \caption{The extended star $S_2(3,3,2)$  and its tilings}
    \label{S_2(3,3,2)}
\end{figure}

It is easy to generalize Theorem~\ref{star:thm}  to the case where $b_{[n]}>1$ so that one has a fatter $[n]$-tile. More generally, we will describe what happens to any tree where $\zh$ is covered by a single element.  An example can be obtained by comparing Figures~\ref{S(3,3,2)} and~\ref{S_2(3,3,2)}.
\begin{prop}
\label{add:zh}
Suppose $T\setm\{\zh\}=T'$ is a rooted tree with $n$ leaves.  Let the $\cI(T)$-tilings be $\tau_1,\tau_2,\ldots,\tau_k$ where $\tau_1$ is the tiling for the orbit of $\zh$.
Then the $\cI(T')$-tilings are $\tau_1',\tau_2,\ldots,\tau_k$ where $\tau_1'$ is obtained from $\tau_1$ by widening the $[n]$-tile by one column.
\end{prop}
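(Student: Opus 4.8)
The plan is to compare the interval data of $T$ and $T'$ and then to locate the unique orbit that involves $\zh$. Deleting $\zh$ changes no surviving vertex's set of leaves lying above it, so $T$ and $T'$ share the same family of branch intervals; a branch $B_I$ with $I\neq[n]$ never contains $\zh$, so $\be_I(T)=\be_I(T')$, whereas $B_{[n]}(T)=B_{[n]}(T')\cup\{\zh\}$ forces $\be_{[n]}(T)=\be_{[n]}(T')+1$. Hence the only way an $\cI(T)$-tiling and an $\cI(T')$-tiling can differ is in the width of the full-height $[n]$-tile, which is longer by one over $T$; this is exactly what one sees comparing $S(3,3,2)$ and $S_2(3,3,2)$ in Figures~\ref{S(3,3,2)} and~\ref{S_2(3,3,2)}.

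Next I would show that an $[n]$-tile appears in exactly one orbit. Every $x\in B_{[n]}$ lies weakly below all of $T$: any $y\notin B_{[n]}$ sits strictly above the top vertex $x_{[n],1}$ of $B_{[n]}$, so $x\le x_{[n],1}<y$. Therefore the only antichains that meet $B_{[n]}$ are the singletons $\{x\}$ with $x\in B_{[n]}$, and, by the cover-in-the-same-branch step used in the proof of the orbit--tiling bijection, rowmotion climbs the branch $\{\zh\}=\{x_{[n],\be_{[n]}}\}\mapsto\{x_{[n],\be_{[n]}-1}\}\mapsto\dots\mapsto\{x_{[n],1}\}$. These $\be_{[n]}$ consecutive antichains fuse into a single $[n]$-tile and all lie in the orbit of $\zh$. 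Consequently $\tau_1$ is the only tiling carrying an $[n]$-tile, and $\tau_2,\dots,\tau_k$ carry none.

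I would then verify that $\tau_2,\dots,\tau_k$ are literally unchanged. Any antichain $A$ occurring in one of these orbits avoids $B_{[n]}$, so $A\sbe T'$ and $A\neq\emp$ (the empty antichain maps to $\{\zh\}$ and hence lies in the orbit of $\zh$). Being nonempty, $A$ has $\zh\in A\da$, so the complement $T\sm(A\da_T)$ avoids $\zh$ and coincides, as an ordered set, with $T'\sm(A\da_{T'})$. Taking minima gives $\rho_T(A)=\rho_{T'}(A)$, and by the comparability observation of the previous paragraph this image again avoids $B_{[n]}$. Hence each such orbit, together with its tiling, is the same whether computed in $T$ or in $T'$.

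Finally I would read off the orbit of $\zh$. Over $T$ it passes through $\dots,\emp,\{\zh\},\{x_{[n],\be_{[n]}-1}\},\dots,\{x_{[n],1}\},\dots$; over $T'$ the lowest vertex of $B_{[n]}$ is now $x_{[n],\be_{[n]}-1}$, and the remaining transitions are identical, so the $T'$-orbit is obtained from the $T$-orbit precisely by deleting the antichain $\{\zh\}$. Equivalently, $\tau_1$ and $\tau_1'$ agree away from the $[n]$-tile, and their $[n]$-tiles differ in width by exactly one column---the column recording $\zh$, present over $T$ and absent over $T'$ (consistently with Figures~\ref{S(3,3,2)} and~\ref{S_2(3,3,2)}). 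Combined with the previous paragraphs this yields the stated correspondence $\tau_1\leftrightarrow\tau_1'$ and $\tau_j\mapsto\tau_j$ for $j\ge2$. The one delicate point, I expect, is the identity $\rho_T(A)=\rho_{T'}(A)$ of the third paragraph: since the downward closure always swallows $\zh$, one cannot compare the ideals directly and must instead compare their complements, which do avoid $\zh$; the rest is routine bookkeeping against Definition~\ref{tau:def}.
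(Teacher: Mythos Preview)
Your argument is correct. In fact, your first paragraph alone is essentially the paper's entire proof: after noting that $T$ and $T'$ share the same branch intervals and that only $\be_{[n]}$ changes (by one), the paper simply cites Definition~\ref{tau:def} and stops, since the tiling rules (t1) and (t2) depend only on this data. What you add in the remaining three paragraphs is an independent verification at the level of antichain rowmotion rather than tilings: you show directly that the $[n]$-tile occurs in exactly one orbit, that $\rho_T=\rho_{T'}$ on nonempty antichains avoiding $B_{[n]}$, and that the $\zh$-orbit over $T$ differs from that over $T'$ by the single antichain $\{\zh\}$. This is more work but does not lean on the orbit--tiling bijection, so it could stand alone even without the tiling model. (Incidentally, you have the sign right---$\be_{[n]}(T)=\be_{[n]}(T')+1$, so $\tau_1$ is the wider tiling---whereas the paper's displayed formula and the word ``widening'' in the statement have this reversed.)
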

\begin{proof}
Since $T\setm\{\zh\}=T'$, the intervals of $T$ and $T'$ are the same. Also 
$$
\be_I(T')=\case{\be_I(T)}{if $I\neq[n]$,}{\be_{[n]}(T)+1}{if $I=[n]$.} 
$$
Definition~\ref{tau:def} now shows that the tilings transform as desired.
\end{proof}

For a positive integer $b$ the {\em $b$-extended star}, $S_b(\al_1,\ldots,\al_n)$, is the rooted tree with
$$
\cI(S_b) = \{([1,1],\be_1),\ \ldots,\ ([n,n],\be_n),\ ([n],b)\}
$$
and $\al_i=\be_i+1$ for $i\in[n]$.  So we recover ordinary stars when $b=1$.
We see $S_2(3,3,2)$ in Figure~\ref{S_2(3,3,2)}.  The next result follows easily from Theorem~\ref{star:thm} and Proposition~\ref{add:zh} and so the proof is omitted.
\begin{cor}
\label{estar}
Consider the extended star $S_b=S_b(\al_1,\ldots,\al_n)$ and an orbit $\cO$ of rowmotion on $S_b$.  Let $l=\lcm(\al_1,\ldots,\al_n)$.  
\begin{enumerate}
\item[(a)] We have 
\[
\#\cO=l+\de b
\]
and the number of orbits is $\al_1\cdots\al_n/l$.
    \item[(b)] For any $x\in S$,
    $$\chi_x(\cO)=\begin{cases} 
      l/\alpha_i & \text{if $x\in B_i$},\\
     \delta & \text{if $x\in B_{[n]}$}.
   \end{cases}$$
    \item[(c)] We have
    $$
       \chi(\cO)=\delta b+\sum_{i=1}^n \frac{l}{\alpha_i}(\alpha_i-1).
    $$
    Thus $\chi$ is homometric but not homomesic.
    \item[(d)] For any $x\in S$
   $$
   \chih_x(\cO)=\case{jl/\alpha_i}{if $x=x_{i,j}$}{l+\de(j-1)}{if $x=x_{[n],j}$.}
     $$
     \item[(e)]  We have
     $$
        \hat{\chi}(\cO)= l b + \de \binom{b}{2}
        +\sum_{i=1}^n \frac{l}{\alpha_i}\binom{\alpha_i}{2}.
     $$
     Thus $\chih$ is homometric but not homomesic.\hqed
\end{enumerate}
\end{cor}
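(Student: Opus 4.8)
The plan is to obtain Corollary~\ref{estar} by bootstrapping from the ordinary-star case already settled in Theorem~\ref{star:thm}, using Proposition~\ref{add:zh} to pass to the extended star and then reading off every statistic from the general formulas in Corollary~\ref{I(T)}. The guiding observation I would start from is that $\cI(S_b)$ is identical to $\cI(S(\al_1,\dots,\al_n))$ except that the root branch now has $\be_{[n]}=b$ instead of $\be_{[n]}=1$. Iterating Proposition~\ref{add:zh} a total of $b-1$ times (each application adjoins one new minimal element below the current root, turning $S_{j-1}$ into $S_j$, all of which are genuine rooted trees) should show that the $\cI(S_b)$-tilings are exactly the ordinary-star tilings with the $[n]$-tile widened from width $1$ to width $b$, and that only the single orbit containing $\zh$ (the one with $\de=1$) is affected. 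From this I would immediately record three facts to reuse throughout: the number of orbits is unchanged at $\al_1\cdots\al_n/l$; every multiplicity $m_I$ is unchanged (widening alters the width of the lone $[n]$-tile, not any count, so $m_{[n]}=\de$ still); and $c_I=0$ for each leaf interval $[i,i]$, which has no proper subinterval in $\cI(S_b)$.

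With these in hand, parts (a)--(c) should fall out quickly. For (a) I would note that widening leaves the $\de=0$ orbits at size $l$ and lengthens the $\de=1$ orbit by $b-1$ columns to size $(l+1)+(b-1)=l+b$, which is the claimed $\#\cO=l+\de b$. Part (b) is a direct application of Corollary~\ref{I(T)}(a) to the unchanged values $m_i=l/\al_i$ and $m_{[n]}=\de$, and (c) follows from Corollary~\ref{I(T)}(b), where the only summand that changes relative to the ordinary star is the root contribution $\be_{[n]}m_{[n]}=b\de$.

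The one place I expect to actually think about the tiling rather than merely substitute is the value of $c_{[n]}$, which is needed for (d) and (e) because the elements $x_{[n],j}$ with $j\ge2$ have no analogue in the ordinary star. Here I would argue that a column misses every $J$-tile with $J\subset[n]$ precisely when it contains no leaf tile, and that the only such columns are the $b$ columns interior to the $[n]$-tile together with the unique all-yellow column representing $\emp$; since $\rho(\emp)=\{\zh\}$, this column sits in the $\de=1$ orbit. Counting then gives $c_{[n]}=l$ when $\de=0$ and $c_{[n]}=(l+b)-(b+1)=l-1$ when $\de=1$, that is $c_{[n]}=l-\de$ uniformly. Feeding $m_{[n]}=\de$ and $c_{[n]}=l-\de$ into Corollary~\ref{I(T)}(c) gives $\chih_{x_{[n],j}}(\cO)=j\de+(l-\de)=l+\de(j-1)$, and combined with the leaf case $\chih_{x_{i,j}}(\cO)=jl/\al_i$ this yields (d).

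For (e) I would split the sum of Corollary~\ref{I(T)}(d) into leaf and root parts: the leaf part reproduces $\sum_i \frac{l}{\al_i}\binom{\al_i}{2}$ exactly as in Theorem~\ref{star:thm}(e) (here $c_i=0$ and $\binom{\be_i+1}{2}=\binom{\al_i}{2}$), while the root part simplifies via $\binom{b+1}{2}-b=\binom{b}{2}$ to $\binom{b+1}{2}\de+b(l-\de)=bl+\de\binom{b}{2}$, giving the stated total. Finally, both $\chi(\cO)$ and $\chih(\cO)$ depend on the orbit only through $\de$, and $\de$ is recoverable from $\#\cO$ (which is $l$ or $l+b$, and these differ since $b\ge1$), so homometry is immediate; non-homomesy follows from the explicit formulas exactly as in the ordinary-star case, the per-orbit averages over the $\de=0$ and $\de=1$ orbits being unequal in general. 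The main obstacle, such as it is, is the careful bookkeeping for $c_{[n]}$---in particular remembering that the empty-antichain column belongs to the $\de=1$ orbit, and that widening the $[n]$-tile adds only columns that meet no leaf tile, so $c_{[n]}$ ends up independent of $b$.
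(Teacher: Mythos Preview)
Your proposal is correct and follows exactly the route the paper indicates: the paper omits the proof entirely, saying it ``follows easily from Theorem~\ref{star:thm} and Proposition~\ref{add:zh},'' and your argument is precisely a careful execution of that plan, iterating Proposition~\ref{add:zh} to widen the $[n]$-tile and then reading off each statistic via Corollary~\ref{I(T)}. Your computation of $c_{[n]}=l-\de$ (the only genuinely new ingredient beyond the ordinary-star case) is correct and fills in the one detail the paper leaves implicit.
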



\section{Trees with three leaves}
\label{ttl}

\begin{figure}
 \begin{tikzpicture}
\fill(1,0) circle(.1);
\fill(0,1) circle(.1);
\fill(2,1) circle(.1);
\draw (0,1)--(1,0)--(2,1);
\draw(1,-.5) node{$T'=T''$};
 \end{tikzpicture}
\hs{20pt}
 \begin{tikzpicture}[scale=.5]
\blr{(0,0)}{(1,2)}
\blr{(1,0)}{(2,1)}
\blr{(1,1)}{(2,2)}
\yer{(2,0)}{(3,1)}
\yer{(2,1)}{(3,2)}
\draw(1.5,-1) node{$\tau_1'=\tau_1''$};
 \end{tikzpicture}
 \hs{20pt}
 \begin{tikzpicture}[scale=.5]
 \yer{(0,0)}{(1,1)}
 \blr{(0,1)}{(1,2)}
  \blr{(1,0)}{(2,1)}
 \yer{(1,1)}{(2,2)}
 \draw(1,-1) node{$\tau_2'=\tau_2''$};
 \end{tikzpicture}
 
 \vs{20pt}
 
  \begin{tikzpicture}
 \fill(1,-1) circle(.1);
\fill(1,0) circle(.1);
\fill(0,1) circle(.1);
\fill(2,1) circle(.1);
\fill(-1,2) circle(.1);
\fill(.5,2) circle(.1);
\fill(1.5,2) circle(.1);
\fill(3,2) circle(.1);
\draw (1,-1)--(1,0) (-1,2)--(1,0)--(3,2) (0,1)--(.5,2) (2,1)--(1.5,2);
\draw(1,-1.5) node{$T$};
 \end{tikzpicture}
\hs{10pt}
\raisebox{40pt}{
$\barr{c}
 \begin{tikzpicture}[scale=.5]
\blr{(0,0)}{(1,2)}
\blr{(0,2)}{(1,4)}
\blr{(1,0)}{(2,1)}
\blr{(1,1)}{(2,2)}
\blr{(1,2)}{(2,3)}
\blr{(1,3)}{(2,4)}
\yer{(2,0)}{(3,1)}
\yer{(2,1)}{(3,2)}
\yer{(2,2)}{(3,3)}
\yer{(2,3)}{(3,4)}
\blr{(3,0)}{(5,4)}
\draw(2.5,-1) node{$\tau^{1,1}_1$};
 \end{tikzpicture}
 \hs{10pt}
 \begin{tikzpicture}[scale=.5]
\blr{(1,0)}{(2,2)}
\blr{(0,2)}{(1,4)}
\blr{(2,0)}{(3,1)}
\blr{(2,1)}{(3,2)}
\blr{(1,2)}{(2,3)}
\blr{(1,3)}{(2,4)}
\yer{(0,0)}{(1,1)}
\yer{(0,1)}{(1,2)}
\yer{(2,2)}{(3,3)}
\yer{(2,3)}{(3,4)}
\draw(2.5,-1) node{$\tau^{1,1}_2$};
 \end{tikzpicture}
 \hs{10pt}
  \begin{tikzpicture}[scale=.5]
\blr{(2,0)}{(3,2)}
\blr{(0,2)}{(1,4)}
\blr{(0,0)}{(1,1)}
\blr{(0,1)}{(1,2)}
\blr{(1,2)}{(2,3)}
\blr{(1,3)}{(2,4)}
\yer{(1,0)}{(2,1)}
\yer{(1,1)}{(2,2)}
\yer{(2,2)}{(3,3)}
\yer{(2,3)}{(3,4)}
\draw(2.5,-1) node{$\tau^{1,1}_3$};
 \end{tikzpicture}
 \hs{30pt}
 \begin{tikzpicture}[scale=.5]
 \yer{(0,0)}{(1,1)}
 \blr{(0,1)}{(1,2)}
  \yer{(0,2)}{(1,3)}
 \blr{(0,3)}{(1,4)}
  \blr{(1,0)}{(2,1)}
 \yer{(1,1)}{(2,2)}
 \blr{(1,2)}{(2,3)}
 \yer{(1,3)}{(2,4)}
 \draw(1,-1) node{$\tau^{2,2}_1$};
 \end{tikzpicture}
 \hs{10pt}
  \begin{tikzpicture}[scale=.5]
 \blr{(0,0)}{(1,1)}
 \yer{(0,1)}{(1,2)}
  \yer{(0,2)}{(1,3)}
 \blr{(0,3)}{(1,4)}
  \yer{(1,0)}{(2,1)}
 \blr{(1,1)}{(2,2)}
 \blr{(1,2)}{(2,3)}
 \yer{(1,3)}{(2,4)}
 \draw(1,-1) node{$\tau^{2,2}_2$};
 \end{tikzpicture}
 \\[5pt]
 \begin{tikzpicture}[scale=.5]
\blr{(0,2)}{(1,4)}
\blr{(1,2)}{(2,3)}
\blr{(1,3)}{(2,4)}
\yer{(2,2)}{(3,3)}
\yer{(2,3)}{(3,4)}
\blr{(3,2)}{(4,4)}
\blr{(4,2)}{(5,3)}
\blr{(4,3)}{(5,4)}
\yer{(5,2)}{(6,3)}
\yer{(5,3)}{(6,4)}
 \yer{(0,0)}{(1,1)}
 \blr{(0,1)}{(1,2)}
  \blr{(1,0)}{(2,1)}
 \yer{(1,1)}{(2,2)}
  \yer{(2,0)}{(3,1)}
 \blr{(2,1)}{(3,2)}
  \blr{(3,0)}{(4,1)}
 \yer{(3,1)}{(4,2)}
   \yer{(4,0)}{(5,1)}
 \blr{(4,1)}{(5,2)}
  \blr{(5,0)}{(6,1)}
 \yer{(5,1)}{(6,2)}
\draw(3,-1) node{$\tau^{1,2}_1$};
 \end{tikzpicture}
 \hs{30pt}
  \begin{tikzpicture}[scale=.5]
\blr{(0,0)}{(1,2)}
\blr{(1,0)}{(2,1)}
\blr{(1,1)}{(2,2)}
\yer{(2,0)}{(3,1)}
\yer{(2,1)}{(3,2)}
\blr{(3,0)}{(4,2)}
\blr{(4,0)}{(5,1)}
\blr{(4,1)}{(5,2)}
\yer{(5,0)}{(6,1)}
\yer{(5,1)}{(6,2)}
\yer{(0,2)}{(1,3)}
\blr{(0,3)}{(1,4)}
\blr{(1,2)}{(2,3)}
\yer{(1,3)}{(2,4)}
\yer{(2,2)}{(3,3)}
\blr{(2,3)}{(3,4)}
\blr{(3,2)}{(4,3)}
\yer{(3,3)}{(4,4)}
\yer{(4,2)}{(5,3)}
\blr{(4,3)}{(5,4)}
\blr{(5,2)}{(6,3)}
\yer{(5,3)}{(6,4)}
\draw(3,-1) node{$\tau^{2,1}_1$};
 \end{tikzpicture}
\earr$ 
}
    \caption{The trees $T',T'',T$ and their tilings}
    \label{two:fig}
\end{figure}
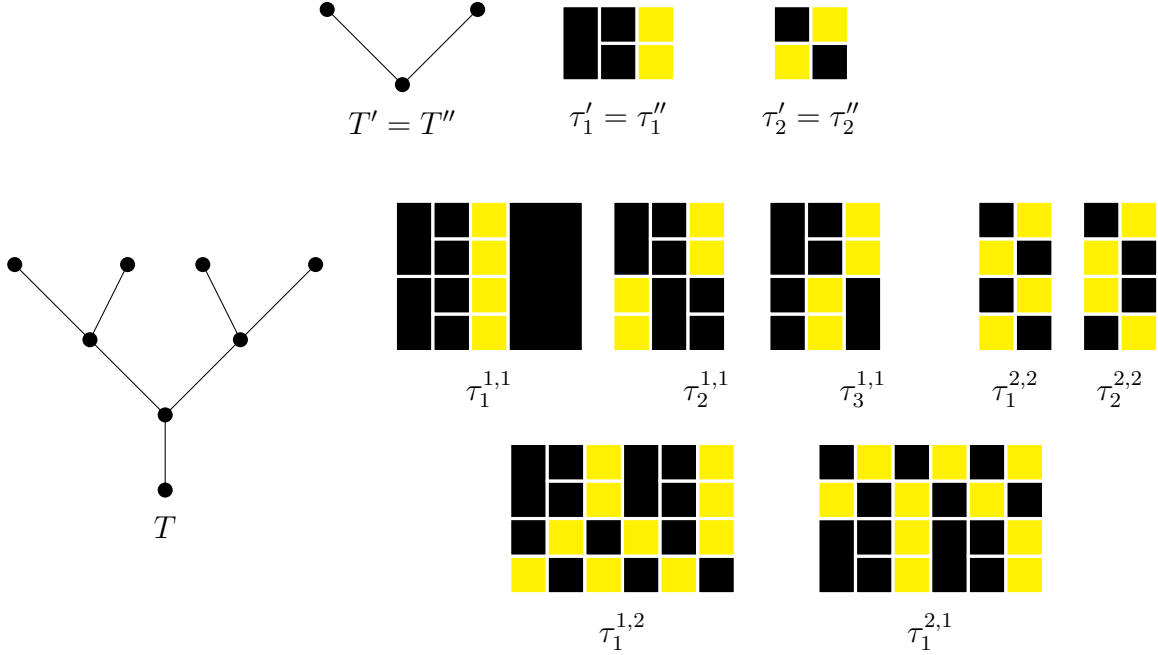

The special case $n=3$ of \Cref{estar} gives information about the rowmotion orbits on trees that have three leaves whose branches have minimal elements covering a single vertex of the tree.  Up to isomorphism, there is only one other arrangement of branches in a tree with three leaves and this section is devoted to studying this case.  First, we will prove a result about removing the branch containing $\zh$ from a certain type of tree.

\Cref{add:zh} describes the tilings of a tree $T$ whose $\zh$ is covered by a single element.  We will determine what happens when it is covered by two elements or, more generally, when removing the branch of $\zh$ leaves exactly two rooted trees remaining.  It is possible to derive a similar result for any number of rooted subtrees, but the notation becomes cumbersome and we will only need the case of two subtrees in the sequel.

In order to state our result we will need some notation. 
Let $T$ be a rooted tree such that $T\setm B=T'\uplus T''$ where $B$ is the branch of $\zh$ and $T',T''$ are rooted trees.  Suppose that $T'$ has $n'$ leaves and tilings $\tau_1',\ldots,\tau_s'$ where $\tau_1'$ is corresponds to the orbit containing $\zh'$, the minimal element of $T'$.  Further, let $c_i'$ be the number of columns of $\tau_i'$ for $i\in[s]$. Notation used previously for $T$ will be given a single prime when applied to $T'$.  Similarly, let $T''$ have $n''$ leaves and tilings $\tau_1'',\ldots,\tau_t''$ with the same conventions about the tilings and other notation except with a double prime.  An example of this construction can be found in \Cref{two:fig}.
\begin{thm}
\label{two:thm}
Let $T$ be a rooted tree with $T\setm B=T'\uplus T''$ as above.
\begin{enumerate}
    \item[(a)]  The tilings of $T$ can be described as follows.  For all $(i,j)\in[n']\times[n'']$ there are  tilings $\tau^{i,j}_m$ for $1\le m\le g_{i,j}:=\gcd(c_i',c_j'')$.  Unless $i=j=m=1$, we have that $\tau^{i,j}_m$ consists of consecutive copies of $\tau_i'$ in the first $n'$ rows, consecutive copies of $\tau_j''$ in the last $n''$ rows, and has $l_{i,j}:=lcm(c_i',c_j'')$ columns.
    Tiling $\tau^{1,1}_1$ is as in the previous sentence except that one copy of $\tau_1'$ and one of $\tau_1''$ align so that their columns of all yellow tiles coincide, and an $[n'+n'']\times b$ black tile is inserted directly after that column to make the total length of the orbit $l_{1,1}+b$ where $b=\#B$.
    \item[(b)]  Let $\cO_i'$, $\cO_j''$, and $\cO^{i,j}_m$ be the orbits corresponding to tilings $\tau_i'$, $\tau_j''$, and $\tau^{i,j}_m$, respectively.  For any $x\in T$
    $$
    \chi_x(\cO^{i,j}_m)=
    \begin{cases}
l_{i,j}\chi_x(\cO_i')/c_i' & \text{if $x\in T'$,}\\
l_{i,j}\chi_x(\cO_j'')/c_j'' & \text{if $x\in T''$,}\\
\de &\text{if $x\in B$.}
    \end{cases}
    $$
\item[(c)]  We have
$$
 \chi(\cO^{i,j}_m)=\de b + l_{i,j}\chi(\cO_i')/c_i' + l_{i,j}\chi(\cO_j'')/c_j''.
$$
\item[(d)]  For any $x\in T$
    $$
    \chih_x(\cO^{i,j}_m)=
    \begin{cases}
l_{i,j}\chih_x(\cO_i')/c_i' & \text{if $x\in T'$,}\\
l_{i,j}\chih_x(\cO_j'')/c_j'' & \text{if $x\in T''$,}\\
l_{i,j}+\de(j-1) &\text{if $x=x_{[n'+n''],j}$.}
    \end{cases}
    $$
\item[(e)]  We have
$$
 \chih(\cO^{i,j}_m)=l_{i,j}b + \de\binom{b}{2} + l_{i,j}\chih(\cO_i')/c_i' + l_{i,j}\chih(\cO_j'')/c_j''.
$$
\end{enumerate}
\end{thm}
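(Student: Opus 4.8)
The plan is to prove part~(a) first, since parts~(b)--(e) then become bookkeeping that feeds the tiling structure of~(a) into Corollary~\ref{I(T)}. The starting observation is that, because the branch $B$ has interval $[n'+n'']$ and every vertex of $T$ outside $B$ lies in $T'$ or $T''$, the family $\cI(T)$ is the disjoint union of the intervals of $\cI(T')$ (all $\sbe [1,n']$), those of $\cI(T'')$ (all $\sbe [n'+1,n'+n'']$ after the obvious shift), and the single full interval $([n'+n''],b)$ with $b=\#B$. The nesting property (I2) then guarantees that no interval of $\cI(T)$ other than $[n'+n'']$ itself straddles the boundary between rows $n'$ and $n'+1$, and that the maximal proper $\cI(T)$-partition of $[n'+n'']$ is exactly $\{[1,n'],[n'+1,n'+n'']\}$.

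For part~(a) I would classify each $\cI(T)$-tiling $\tau$ according to whether it contains an $[n'+n'']$-tile, which I will call a \emph{$B$-tile}. First, $\tau$ has at most one $B$-tile, since a $B$-tile records one pass of an antichain up $B$, hence one occurrence of $\zh$, and $\zh$ occurs once per orbit. If $\tau$ has no $B$-tile then, by the paragraph above, every tile lies entirely in the top $n'$ rows or the bottom $n''$ rows, so $\tau$ splits into a tiling $\tau^{\mathrm{top}}$ of rows $[1,n']$ and $\tau^{\mathrm{bot}}$ of rows $[n'+1,n'+n'']$. I would then verify that conditions (t1) and (t2) restricted to each half are precisely the defining conditions of an $\cI(T')$- and an $\cI(T'')$-tiling; the only delicate point is a maximal yellow interval crossing the boundary, but by (I2) its maximal $\cI(T)$-partition splits at the boundary, so each half is governed independently by rowmotion on $T'$, respectively $T''$, and is therefore a repetition of some orbit tiling $\tau_i'$, respectively $\tau_j''$. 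Conversely, stacking $\tau_i'$ over $\tau_j''$ yields a legal $\cI(T)$-tiling exactly when the overlay has no column that is simultaneously all-yellow in both halves, since (t2) would force such a column to be followed by a $B$-tile.

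The combinatorial heart of~(a), and the step I expect to be the main obstacle, is counting these overlays. Stacking a primitive period-$c_i'$ tiling over a primitive period-$c_j''$ tiling on a cylinder produces, up to rotation, exactly $g_{i,j}=\gcd(c_i',c_j'')$ distinct tilings, each of minimal period $l_{i,j}=\lcm(c_i',c_j'')$: fixing the phase of the top, the $c_j''$ phases of the bottom fall into orbits of size $c_j''/g_{i,j}$ under the shift-by-$c_i'$ action coming from rotations that preserve the top, leaving $g_{i,j}$ classes. Since $\tau_k'$ has an all-yellow column if and only if its orbit meets the empty antichain, that is if and only if $k=1$, the only problematic pair is $(i,j)=(1,1)$: there the all-yellow columns of $\tau_1'$ and $\tau_1''$ can be made to coincide for exactly one residue class, hence for exactly one of the $g_{1,1}$ overlays. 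That overlay is illegal as a plain tiling and is instead realized as $\tau^{1,1}_1$ by inserting the forced $B$-tile of width $b$ directly after the shared all-yellow column; this is consistent with (t1), which splits the $B$-tile into the root branches of $T'$ and $T''$, matching the columns $\{\zh'\}$ and $\{\zh''\}$ that begin $\tau_1'$ and $\tau_1''$ immediately after their empty-antichain columns. The resulting orbit has length $l_{1,1}+b$, while every other overlay gives a generic $\tau^{i,j}_m$ of length $l_{i,j}$, which completes~(a).

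For parts~(b)--(e) I would read the tile data off~(a) and substitute into Corollary~\ref{I(T)}. For $I\sbe[1,n']$ every $I$-tile, and every column meeting a $J$-tile with $J\sbs I$, lives inside one of the $l_{i,j}/c_i'$ copies of $\tau_i'$, so both $m_I$ and the quantity $c_I$ appearing in Corollary~\ref{I(T)}(c) scale by the factor $l_{i,j}/c_i'$ (the solid $B$-tile contributes nothing); symmetrically for $T''$. This gives the $T'$ and $T''$ cases of~(b) and~(d) at once, and after summation the corresponding terms of~(c) and~(e). The genuinely new computations all concern the interval $[n'+n'']$: here $m_{[n'+n'']}=\de$ gives the $x\in B$ case of~(b), and for~(d) one computes $c_{[n'+n'']}$, the number of columns meeting a proper sub-branch tile, as $l_{i,j}$ when $\de=0$ (no column is fully yellow) and as $l_{1,1}-1$ when $\de=1$ (the shared all-yellow column together with the $b$ columns of the $B$-tile are excluded), so that $j\de+c_{[n'+n'']}$ simplifies to $l_{i,j}+\de(j-1)$ via Corollary~\ref{I(T)}(c). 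For~(e) the same value of $c_{[n'+n'']}$ together with the identity $\binom{b+1}{2}-b=\binom{b}{2}$ produces the term $l_{i,j}b+\de\binom{b}{2}$, and adding the rescaled $T'$ and $T''$ contributions finishes the proof.
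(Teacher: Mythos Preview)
Your argument is correct and arrives at the same description of the orbits, but the framing differs from the paper's. The paper argues dynamically: it picks an antichain $A\in\cO$ disjoint from $B$, writes $A=A'\uplus A''$ with $A'\sbe T'$ and $A''\sbe T''$, and observes directly that applying $\rho$ in $T$ advances $A'$ and $A''$ along their respective orbits $\cO_i'$ and $\cO_j''$ independently, so long as the empty antichain of $T$ is never reached. The $\lcm$/$\gcd$ count then drops out of this product-of-cycles picture, and the exceptional orbit is the unique alignment in which both pieces hit $\emp$ simultaneously, after which the elements of $B$ are traversed before the two sub-orbits resume. Parts (b)--(e) are dismissed as routine.

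You instead work statically through the tiling axioms: decomposing $\cI(T)$, checking that a $B$-tile-free tiling restricts to legal $\cI(T')$- and $\cI(T'')$-tilings on the two row bands, and then counting overlays. This requires a bit more verification (the boundary-crossing yellow intervals, primitivity of the restricted tilings), but it makes the overlay structure of each $\tau^{i,j}_m$ completely explicit, which is precisely what you then feed into Corollary~\ref{I(T)} for (b)--(e). Your computation of $c_{[n'+n'']}$ and the simplification $\binom{b+1}{2}-b=\binom{b}{2}$ fill in detail the paper leaves to the reader. Either route is fine; the paper's is shorter because it never leaves the antichain picture, while yours stays closer to the tiling model that the rest of the paper is built on.
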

\begin{proof}
We will only prove (a), as once this is established then the other parts of the theorem follow from straight-forward computations similar to those already seen in \Cref{star:thm}.  Let $\cO$ be an antichain orbit of $T$. Pick an antichain $A$ in $\cO$ which does not contain an element of $B$, so that it  can be written as $A=A'\uplus A''$ where $A'=A\cap T'$ and $A''=A\cap T''$.  Let $\cO'$ and $\cO''$ be the orbits of $A'$ and $A''$ in $T'$ and $T''$, respectively. 

First consider the case when (at least) one of $\cO'$ and $\cO''$ does not contain the empty antichain.   It follows that as $\rho$ is applied to $A$, the antichains $A'$ and $A''$ will describe their respective orbits $\cO'$ and $\cO''$ in $T'$ and $T''$.  
If $c'=\#\cO'$ and $c''=\#\cO''$ then, in order for both orbits to return to $A'$ and $A''$ at the same time, we must have $\#\cO=\lcm(c',c'')$.  And since there are $c' c''$ ways to pair an antichain in $\cO'$ with one in $\cO''$, the total number of orbits obtained from such pairs is $c' c''/\lcm(c',c'') = \gcd(c',c'')$.
This description matches the one given for the tilings $\tau^{i,j}_k$ for as long as we do not have $i=j=1$.

In the case when both $\cO'$ and $\cO''$ contain the empty antichain, the argument of the previous paragraph goes through with one exception.  Suppose the elements of $\cO'$ and $\cO''$ are repeated in $\cO$ in such a way that at some point the empty antichain of $T$ is reached.  Then  $\emp$ will be followed by the elements of $B$ in increasing order.  This, in turn, will be followed by the antichain $\{\zh',\zh''\}$ which will cause the orbits $\cO'$ and $\cO''$ to continue.  This orbit corresponds to the tiling $\tau^{1,1}_1$ and completes our description of the orbits and their tilings.
\end{proof}

\begin{figure}
 \begin{tikzpicture}
\fill(4,0) circle(.1);
\fill(4,1) circle(.1);
\fill(4,2) circle(.1);
\fill(3,3) circle(.1);
\fill(5,3) circle(.1);
\fill(2,4) circle(.1);
\fill(5,4) circle(.1);
\fill(1,5) circle(.1);
\fill(0,6) circle(.1);
\fill(2,6) circle(.1);
\fill(0,7) circle(.1);
\fill(2,7) circle(.1);
\draw (4,0)--(4,2)--(0,6)--(0,7) (1,5)--(2,6)--(2,7) (4,2)--(5,3)--(5,4);
\draw(0,7.5) node{$1$};
\draw(2,7.5) node{$2$};
\draw(5,4.5) node{$3$};
\draw(4,-.5) node{$T_3$};
 \end{tikzpicture}
    \caption{The tree $T_3$}
    \label{T_3}
\end{figure}
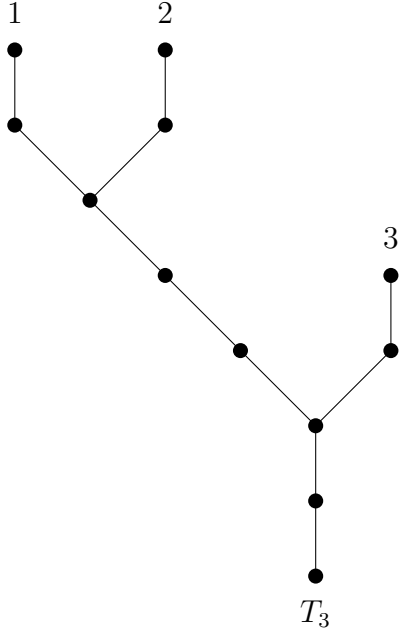

Now consider a tree $T$ with three leaves which is not an extended star.  It follows that, using a suitable embedding, we will have
$$
\cI(T) = \{ ([3], a),\ ([2],b),\ ([1,1],c),\ ([2,2],d),\ ([3,3],e)\}
$$
for $a,b,c,d,e\ge1$.  A particular tree of this form is shown in \Cref{T_3}.  Although we can use the previous theorem to calculate the orbits and their statistic values for arbitrary $a,b,c,d,e$ the resulting formulas are not very enlightening.  So we will concentrate on a specific tree of this type.  Define the three-leaf tree $T_k$ to be the one with
$$
\cI(T) = \{ ([3], k),\ ([2],k),\ ([1,1],k-1),\ ([2,2],k-1),\ ([3,3],k-1)\}.
$$
The tree in \Cref{T_3} is $T_3$.
\begin{thm}
The orbits of rowmotion on $T_k$ can be partitioned by length into three sets $\cS$ (for small), $\cM$ (for medium), and $\cL$ (for large) with the following properties.
\begin{enumerate}
    \item[(a)] We have 
    $$
    \#\cS=k(k-1),\quad\#\cM=k-1,\quad\#\cL=1,
    $$
    and 
    $$
    \#\cO =
    \begin{cases}
    k&\text{if $\cO\in\cS$},\\
    2k&\text{if $\cO\in\cM$},\\
    3k&\text{if $\cO\in\cL$}.
    \end{cases}
    $$
    \item[(b)] We have
    $$
    \chi(\cO) =
    \begin{cases}
    3k-3 &\text{if $\cO\in\cS$},\\
    5k-4 &\text{if $\cO\in\cM$},\\
    6k-4 &\text{if $\cO\in\cL$}.
    \end{cases}
    $$
    Thus $\chi$ is homometric but not homomesic.
    \item[(c)] We have
    $$
    \hat{\chi}(\cO) =
    \begin{cases}
    \frac{7}{2}k^2-\frac{3}{2}k &\text{if $\cO\in\cS$},\\[1ex]
    \frac{11}{2}k^2-\frac{5}{2}k &\text{if $\cO\in\cM$},\\[1ex]
    6k^2-3k &\text{if $\cO\in\cL$}.
    \end{cases}
    $$
    Thus $\hat{\chi}$ is homometric but not homomesic.
\end{enumerate}
\end{thm}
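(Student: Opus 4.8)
The plan is to realize $T_k$ through the root-branch decomposition of \Cref{two:thm} and then feed the data of the two pieces into that theorem. Removing the branch $B=B_{[3]}$ of the root (so $b=\#B=k$) splits $T_k$ into two rooted trees: the ``cherry'' $T'$ on leaves $1,2$, with branches $B_{[2]},B_{[1,1]},B_{[2,2]}$, and the chain $T''$ on leaf $3$, namely $B_{[3,3]}$. Reading off the $\be$-values gives $\cI(T')=\{([1,1],k-1),([2,2],k-1),([1,2],k)\}$, so $T'$ is exactly the extended star $S_k(k,k)$ (with $\al_1=\al_2=k$ and stem-fatness $b=k$), while $T''$ is a chain on $k-1$ elements. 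The first step is to verify this identification carefully, since the whole argument rests on it.

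Second, I would record the orbit data of the two pieces. For $T'=S_k(k,k)$, \Cref{estar} gives $l=\lcm(k,k)=k$, hence $k$ orbits: the orbit $\cO_1'$ through $\zh'$ has $c_1'=2k$ columns, and the remaining $k-1$ orbits $\cO_2',\dots,\cO_k'$ each have $c_i'=k$ columns; parts (c) and (e) of \Cref{estar} supply $\chi(\cO_i')$ and $\chih(\cO_i')$, distinguishing $\de'=1$ for $\cO_1'$ from $\de'=0$ otherwise. For the chain $T''$ there is a single orbit $\cO_1''$ with $c_1''=k$, running $\emp,\{\zh''\},\dots$ up to the top; a direct count gives $\chi(\cO_1'')=k-1$ and $\chih(\cO_1'')=0+1+\dots+(k-1)=\binom{k}{2}$.

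Third, I would assemble the orbits of $T_k$ via \Cref{two:thm}(a). With $j=1$ always, the pair $(i,1)$ contributes $g_{i,1}=\gcd(c_i',k)$ orbits of length $l_{i,1}=\lcm(c_i',k)$. For $i=1$ this is $\gcd(2k,k)=k$ orbits of length $2k$, one of which ($i=j=m=1$) is the exceptional orbit through $\zh$, whose length is enlarged to $l_{1,1}+b=3k$; for $2\le i\le k$ it is $\gcd(k,k)=k$ orbits of length $k$. This produces the three classes, namely $\cL$ with the single length-$3k$ orbit, $\cM$ with the $k-1$ length-$2k$ orbits, and $\cS$ with the $k(k-1)$ length-$k$ orbits, which is part (a). As a sanity check the number of orbits is $k^2$ and $\sum\#\cO=k^3+k^2+k=\#\cA(T_k)$, the latter confirmed by iterating \Cref{num:A}.

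Finally, parts (b) and (c) follow by substituting the recorded values into \Cref{two:thm}(c),(e). For example $\chi(\cO_m^{i,1})=\de b+l_{i,1}\chi(\cO_i')/c_i'+l_{i,1}\chi(\cO_1'')/c_1''$ collapses in the three cases to $3k-3$, $5k-4$, $6k-4$, and the analogous substitution for $\chih$ (now carrying the $\de\binom{b}{2}$ term and the $\chih$-values above) yields $\tfrac72k^2-\tfrac32k$, $\tfrac{11}2k^2-\tfrac52k$, $6k^2-3k$. Homometry is then immediate, since the orbit length ($k$, $2k$, or $3k$) determines the class and hence the constant value; non-homomesy follows by comparing averages $\chi(\cO)/\#\cO$, e.g.\ $(3k-3)/k=3-3/k$ against $(6k-4)/(3k)=2-4/(3k)$, which disagree for every $k\ge2$. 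The one genuine obstacle is bookkeeping: keeping the exceptional orbit $\cO_1^{1,1}$ separate and matching the prime/double-prime indexing of \Cref{two:thm} to the correct $\gcd$/$\lcm$ data so that the arithmetic lands on the stated closed forms.
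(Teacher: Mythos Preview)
Your proposal is correct and follows essentially the same route as the paper: decompose $T_k$ along the root branch $B_{[3]}$ into $T'=S_k(k,k)$ and the chain $T''$, read off the orbit data for each piece from \Cref{estar} (and the trivial chain computation), and then assemble everything via \Cref{two:thm}. The paper carries out exactly this computation, writing $T''$ as $S(k)$ rather than as a $(k-1)$-element chain, but the resulting orbit data $c_1''=k$, $\chi(\cO_1'')=k-1$ are identical; your added sanity check against $\#\cA(T_k)$ and the explicit comparison of averages for non-homomesy are nice touches the paper omits.
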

\begin{proof}
(a)  Let $B=B_{[3]}$ and $b=\#B=k$.  Then $T_k\setm B= S_k(k,k) \uplus S(k)$ is a disjoint union of two (extended) stars.  Clearly $T''=S(k)$ has only one orbit which contains $\zh$.
By \Cref{estar}, $T'=S_k(k,k)$ has orbits of size $\lcm(k,k)=k$ and the total number of orbits is $(k\cdot k)/k = k$.  So one of these orbits contains $\zh$ and the other $k-1$ do not, and they will have lengths given by $k+\de k$. 
It follows that the latter will be of length $k$ while the former is of length $2k$.
Applying \Cref{two:thm}, $T_k$ will have $k(k-1)$ orbits $\cO^{i,1}_m$ with $i\neq 1$ and these will have length $\lcm(k,k)=k$.  These are the orbits in $\cS$.  There will also be the orbits $\cO^{1,1}_m$ for $m\in[2,k]$  which gives $k-1$ possible values for $m$.  Here the length is $\lcm(2k,k)=2k$.  These are the orbits in $\cM$.  Finally, the unique orbit $\cO^{1,1}_1$ is of length $2k+b=2k+k=3k$ and this describes $\cL$.

\medskip

(b)  We will do the case of $\cO^{1,1}_1$, the unique element of $\cL$, as the others are similar.  Applying \Cref{estar} (c) to orbit $\cO_1'$ of $T'=S_k(k,k)$ gives
$$
\chi(\cO_1') = k +\sum_{i=1}^2 \frac{k}{k}(k-1) = 3k-2
$$
Similarly, for $\cO_1''$ in $T''=S(k)$ we have
$$
\chi(\cO_1'') = k-1.
$$
Now applying \Cref{two:thm} (c) with $l_{1,1} =\lcm(2k,k)=2k$ yields
$$
\chi(\cO^{1,1}_1) = k + 2k(3k-2)/(2k) + 2k(k-1)/k = 6k-4.
$$

\medskip
(c)  The computations are like those in (b) except using \Cref{two:thm} (e), so the details are omitted.
\end{proof}

\begin{figure}
 \begin{tikzpicture}
\fill(3,0) circle(.1);
\fill(2,1) circle(.1);
\fill(4,1) circle(.1);
\fill(1,2) circle(.1);
\fill(3,2) circle(.1);
\fill(0,3) circle(.1);
\fill(2,3) circle(.1);
\draw (0,3)--(3,0)--(4,1) (2,1)--(3,2) (1,2)--(2,3);
\draw(0,3.5) node{$1$};
\draw(2,3.5) node{$2$};
\draw(3,2.5) node{$3$};
\draw(4,1.5) node{$4$};
\draw(3,-.5) node{$C_3$};
 \end{tikzpicture}
 \hs{20pt}
  \begin{tikzpicture}
\fill(3,0) circle(.1);
\fill(2,1) circle(.1);
\fill(4,1) circle(.1);
\fill(1,2) circle(.1);
\fill(0,3) circle(.1);
\fill(2,3) circle(.1);
\fill(-1,4) circle(.1);
\fill(-2,5) circle(.1);
\fill(0,5) circle(.1);
\draw (-2,5)--(2,1) (2,1)--(3,0)--(4,1)  (1,2)--(2,3) (-1,4)--(0,5);
\draw(-2,5.5) node{$1$};
\draw(0,5.5) node{$2$};
\draw(2,3.5) node{$3$};
\draw(4,1.5) node{$4$};
\draw(3,-.5) node{$C_{3,2}$};
 \end{tikzpicture}
    \caption{The comb $C_3$ and extended comb $C_{3,2}$}
    \label{comb:fig}
\end{figure}
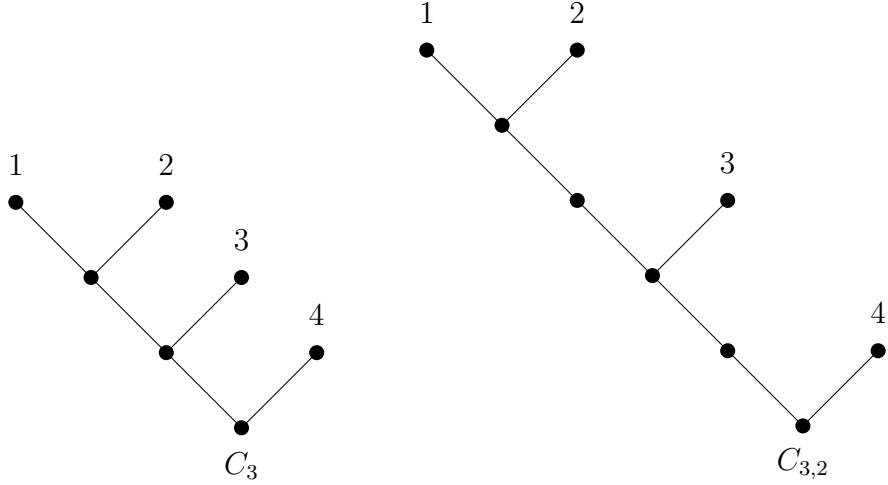

\section{Combs and zippers}
\label{cz}

Combs are a particularly simple type of binary tree.  They are useful in understanding the structure of the free Lie algebra as shown, for example, in the work of Wachs~\cite{wac:cpl}.  In this section we will compute the orbit structure of combs, combs with an extended backbone, and zippers which are constructed by pasting together combs.

It will be convenient to consider combs which have $n+1$ leaves.  Specifically, the {\em comb}, $C_n$, is the rooted tree with
$$
\cI(C_n)=\{ ([n+1],1),\ ([n],1),\ \ldots,\ ([2],1),
([1,1],1),\ ([2,2],1),\ \ldots,\ ([n+1,n+1],1)\}.
$$
The comb $C_3$ is shown on the left in Figure~\ref{comb:fig}. 
\begin{thm}
\label{C_n:thm}
The orbits of rowmotion on $C_n$ can be partitioned into two sets $\mathcal{S}$ and $\mathcal{L}$  having the following properties.
\begin{enumerate}
\item[(a)]
We have
    $$
    \# \cS=2^{n-1},\quad\#\cL=1,
    $$
    and
    $$
    \#\cO =
    \begin{cases}
   2 &\text{if $\cO\in\cS$},\\
   2^{n+1}-1 &\text{if $\cO\in\cL$}.
    \end{cases}
    $$
\item[(b)]
We have
    $$
    \chi(\cO) =
    \begin{cases}
    n+1 &\text{if $\cO\in\cS$},\\
    (2n+1)2^{n-1} &\text{if $\cO\in\cL$}.
    \end{cases}
    $$
Thus $\chi$ is homometric but not homomesic.
\item[(c)]
We have
    $$
    \hat{\chi}(\cO) =
    \begin{cases}
    3n+1 &\text{if $\cO\in\cS$},\\
    2^{n-1}(6n-5) + 3 &\text{if $\cO\in\cL$}.
    \end{cases}
    $$
Thus $\hat{\chi}$ is homometric but not homomesic.
\end{enumerate}
\end{thm}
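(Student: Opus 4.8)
My plan is to induct on $n$, at each step peeling off the branch of the root and applying \Cref{two:thm}. Since $\be_{[n+1]}=1$, the branch $B=B_{[n+1]}$ of $\zh$ is the single vertex $\zh$, and $C_n\setm B$ splits into exactly two rooted subtrees: the comb $C_{n-1}$ growing from the backbone vertex with interval $[1,n]$, and the lone leaf with interval $[n+1,n+1]$. Thus I would take $T'=C_{n-1}$ and $T''$ the one-vertex tree, with $b=\#B=1$. The one-vertex tree has the single orbit $\emp\mapsto\{\zh''\}\mapsto\emp$, so $t=1$, $c_1''=2$, and this orbit contains $\zh''$; moreover $\chi(\cO_1'')=\chih(\cO_1'')=1$. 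I would strengthen the inductive hypothesis to also record that $\zh'$ lies in the large orbit of $C_{n-1}$ (true for $C_1$, and preserved because the special $(1,1)$ tiling of \Cref{two:thm} inserts the columns $\emp$, then $B$, then $\{\zh',\zh''\}$, so the large orbit of $C_n$ contains its root). Hence $\tau_1'$ is the large tiling with $c_1'=2^{n}-1$, while the remaining $2^{n-2}$ tilings of $C_{n-1}$ are small with $c_i'=2$.

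For part (a) I would feed these data into \Cref{two:thm}(a). The pair $(1,1)$ gives $g_{1,1}=\gcd(2^{n}-1,2)=1$, so a single orbit, which is the special one of length $l_{1,1}+b=\lcm(2^{n}-1,2)+1=2(2^{n}-1)+1=2^{n+1}-1$; this is the unique member of $\cL$. Each small orbit $i\ge2$ of $C_{n-1}$ paired with $j=1$ gives $g_{i,1}=\gcd(2,2)=2$ orbits, each of length $l_{i,1}=\lcm(2,2)=2$, contributing to $\cS$. Summing yields $\#\cL=1$ and $\#\cS=2\cdot 2^{n-2}=2^{n-1}$ with the stated sizes; as a check, the total antichain count $1\cdot(2^{n+1}-1)+2^{n-1}\cdot 2=3\cdot 2^{n}-1$ agrees with \Cref{num:A}. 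The base case $n=1$ is verified by hand: the large orbit is $\emp\mapsto\{\zh\}\mapsto\{1,2\}$ of size $3=2^{2}-1$ and the single small orbit is $\{1\}\leftrightarrow\{2\}$.

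For parts (b) and (c) I would convert the same decomposition into recursions using \Cref{two:thm}(c),(e). Writing $L_n,S_n$ for the value of $\chi$ on the large and small orbits of $C_n$ and $\hat L_n,\hat S_n$ for the corresponding values of $\chih$, and using $b=1$ (so $\binom{b}{2}=0$) together with $\chi(\cO_1'')=\chih(\cO_1'')=1$, the formulas collapse to
\[
L_n=2^{n}+2L_{n-1},\qquad S_n=S_{n-1}+1,
\]
\[
\hat L_n=3(2^{n}-1)+2\hat L_{n-1},\qquad \hat S_n=\hat S_{n-1}+3,
\]
with initial data $L_1=3,\ S_1=2,\ \hat L_1=4,\ \hat S_1=4$ read off from $C_1$. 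Solving these first-order recursions gives $S_n=n+1$, $L_n=(2n+1)2^{n-1}$, $\hat S_n=3n+1$, and $\hat L_n=2^{n-1}(6n-5)+3$, exactly the claimed values. Homometry is then immediate, since each statistic is constant on the size-$2$ class and $\cL$ is a singleton; non-homomesy follows by noting that $\chi(\cO)/\#\cO$ (and likewise $\chih(\cO)/\#\cO$) takes different values on $\cS$ and $\cL$.

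The only real obstacle is bookkeeping rather than a hard idea. The two points that must be handled with care are (i) confirming that the root of $C_{n-1}$ lies in its large orbit, so that $\tau_1'$ is correctly identified and the special $(1,1)$ case of \Cref{two:thm} produces the single large orbit of $C_n$; and (ii) the parity observation $\gcd(2^{n}-1,2)=1$ for $n\ge1$, which is precisely what forces $\#\cL=1$ and doubles each small orbit. Once these are in place, the remaining computations---the gcd/lcm evaluations and the solution of the recursions---are entirely routine.
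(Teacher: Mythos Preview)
Your proposal is correct and follows essentially the same approach as the paper: both induct on $n$ by removing the root so that $C_n$ decomposes into $C_{n-1}$ together with a single leaf, and then apply \Cref{two:thm}, using the parity fact $\gcd(2^{n}-1,2)=1$ to isolate a unique large orbit. The only cosmetic difference is that for parts (b) and (c) you extract and solve the first-order recursions $L_n=2^n+2L_{n-1}$, etc., whereas the paper plugs the closed forms directly into the inductive step; these are the same computation written two ways.
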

\begin{proof}
(a)  We induct on $n$ where the result is easy to check if $n=1$.  Assume the orbits are as stated for $C_n$ and that the unique orbit in $\cL$ is the one containing $\zh$.
We see that $C_{n+1}\setm\{\zh\} = C_n\uplus\{v\}$ where $v$ is the leaf labeled $n+2$.
We will subscript notation with $n$ or $n+1$ to make it clear which comb is meant.

Now $T''=\{v\}$ has only one orbit of length $2$.  By \Cref{two:thm}, this combines with each of the orbits in $\cS_n$ to give orbits of length $\lcm(2,2)=2$.  Also, there will be $\gcd(2,2)=2$ orbits in $\cS_{n+1}$ for every one in $\cS_n$ for a total of $2\cdot 2^{n-1}=2^n$ orbits.  Thus the information about $\cS_{n+1}$ is as desired.

The one orbit in $\cL_n$ will combine with the one for $\{v\}$ to give $\gcd(2,2^{n+1}-1)=1$ orbit which must be the one containing $\zh$.  So its length will be
$\lcm(2,2^{n+1}-1)+1 = 2^{n+2}-1$, which finishes the induction.

\medskip

(b)  Again we induct,  only providing details for the orbit of $\zh$ in $\cL$.  Using the notation for \Cref{two:thm} we have $c_1'=2^{n+1}-1$ and $c_1''=2$.  So $l_{1,1}= c_1' c_1''$ and the formula in part (c) of that theorem becomes
$$
\chi(\cO)=1+2(2n+1)2^{n-1} + (2^{n+1}-1)\cdot 1 = 
(2n+3) 2^n
$$
as it should be.

\medskip

(c)  This demonstration is similar to that of (b) above using \Cref{two:thm} (d) and so is omitted.
\end{proof}

We can generalize these comb results as follows.  The {\em backbone} of a comb is the set of elements which are not leaves.  So $C_n$ has an $n$-element backbone and each element is an interval in $\cI(C_n)$.  We will extend each of these intervals, except for the one corresponding to $\zh$, so that they have $k$ elements.  Formally, the {\em extended comb}, $C_{n,k}$, is defined as the tree with
$$
\cI(C_n)=\{ ([n+1],1),\ ([n],k),\ \ldots,\ ([2],k),
([1,1],1),\ ([2,2],1),\ \ldots,\ ([n+1,n+1],1)\}.
$$
On the right in Figure~\ref{comb:fig} is the extended comb $C_{3,2}$.
Note that $C_{n,1}=C_n$.
\begin{thm}
The orbits of the extended comb $C_{n,k}$ can be partitioned into two sets $\cS$ and $\cL$ when $k$ is odd, and into $n+1$ sets $\cS_1,\cS_2,\ldots,\cS_n$ and $\cL$ when $k$ is even.  The orbits have properties given by the following tables for $k$ odd:
\bce
    \begin{tabular}{|c|c|c|}
    \hline
         $k$ odd & $\cS$ &$\cL$ \\ \hline
        $\#\cO$ & $2$ & $(k+1)2^n-2k+1$ \\
        number of $\cO$ & $2^{n-1}$ & $1$\\ 
        $\chi(\cO)$ & $n+1$ &$((k+1)n+1)2^{n-1}-k+1$ \\
        $\hat{\chi}(\cO)$ & $(2k+1)n-2k+3$ &$(2k+1)(k+1)n 2^{n-1}-(5k^2+3k-3)2^{n-1}+3k^2$ \\\hline
    \end{tabular}
\ece
and for $k$ even:
\bce
    \begin{tabular}{|c|c|c|}
    \hline
         $k$ even & $\cS_i$ for $i\in[n]$ &$\cL$ \\ \hline
        $\#\cO$ & $k(i-1)+2$ & $k(n-1)+3$ \\
        number of $\cO$ & $2^{n-i}$ & $1$\\ 
        $\chi(\cO)$ & $\frac{k(i-1)+2}{2}n-\frac{k}{4}(i^2-5i+4)+1$ & $\frac{k}{4}n^2+\frac{3k+4}{4}n-k+2$ \\[5pt]
        $\hat{\chi}(\cO)$
&
$\frac{(2k+1)(k(i-1)+2)}{2}n-\frac{k(2k+1)}{4}i^2+\frac{3k}{4}i+\binom{k-2}{2}$ & 
$\frac{k(2k+1)}{4}n^2-\frac{4k^2-9k-4}{4}n+\binom{k-2}{2}$ \\[5pt]
        \hline
    \end{tabular}
\ece
Thus $\chi$ and $\chih$ are homometric on $C_{n,k}$.
\end{thm}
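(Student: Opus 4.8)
The plan is to induct on $n$ with $k$ fixed, building the orbit data of $C_{n+1,k}$ from that of $C_{n,k}$ using the two structural tools already in hand. The base case $n=1$ is degenerate: the interior backbone intervals $[n],\ldots,[2]$ are vacuous, so $C_{1,k}=C_1$ for every $k$, and both tables reduce to the $n=1$ instance of \Cref{C_n:thm} (note the two parity columns agree there, lengths $2$ and $3$). For the inductive step, I would use that deleting the root from $C_{n+1,k}$ gives
\[
C_{n+1,k}\setm\{\zh\} = \widetilde{C}_{n,k}\uplus\{v\},
\]
where $v$ is the leaf labeled $n+2$ and $\widetilde{C}_{n,k}$ denotes the comb on leaves $1,\ldots,n+1$ whose \emph{entire} backbone, including its root, is fattened to $k$ elements. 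Since the branch $B$ of $\zh$ in $C_{n+1,k}$ is the single interval $[n+2]$ with $\be_{[n+2]}=1$, we have $\#B=1$, and the hypotheses of \Cref{two:thm} hold with $T'=\widetilde{C}_{n,k}$ and $T''=\{v\}$.

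First I would record the orbit data of $\widetilde{C}_{n,k}$ from that of $C_{n,k}$. The two trees differ only in the length of the root branch, so \Cref{add:zh}, applied $k-1$ times, shows their tilings agree except that the unique orbit containing $\zh$ (equivalently, containing the empty antichain, since $\rho(\emp)=\{\zh\}$) has its top tile widened by $k-1$ columns; hence that orbit is longer by $k-1$ and every other orbit is unchanged. Thus the large orbit of $\widetilde{C}_{n,k}$ has length $(k+1)2^n-k$ when $k$ is odd and $kn+2$ when $k$ is even, while its small orbits are exactly those of $C_{n,k}$. The statistic values transform by the same principle: widening the top tile inserts the $k-1$ new root-branch elements $x_{[n+1],j}$ into the $\zh$-orbit, whose contributions to $\chi$ and $\chih$ I would read off from \Cref{I(T)}(a),(c).

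Next I would apply \Cref{two:thm} to pair each orbit of $\widetilde{C}_{n,k}$ (column count $c'$) with the single length-$2$ orbit of $\{v\}$, so $c''=2$. Here the parity of $k$ is the whole story, because $\gcd(c',2)$ equals $1$ when $c'$ is odd and $2$ when $c'$ is even. When $k$ is odd the large orbit length is odd, so it fuses with $\{v\}$ into a single orbit of length $\lcm(c',2)+\#B=2c'+1$, reproducing $\cL$, while the small orbits (length $2$) are generic pairings, each splitting into $\gcd(2,2)=2$ orbits of length $2$ and doubling the count $2^{n-1}\mapsto 2^{n}$. When $k$ is even every orbit length of $\widetilde{C}_{n,k}$ is even, so each small orbit of length $c'=k(i-1)+2$ yields two generic orbits of length $c'$, giving multiplicities $2^{n-i}\mapsto 2^{(n+1)-i}$; the key point is that the large orbit has length $kn+2=k((n+1)-1)+2$, and being the only orbit meeting the empty antichain it is the special pair $\cO^{1,1}_m$, so one of its two descendants receives the inserted $[n+2]\times1$ black tile and lengthens to $kn+3$ (the new $\cL$) while the other becomes the top small class $\cS_{n+1}$. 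This is precisely the mechanism producing the graded family $\cS_1,\ldots,\cS_{n+1}$.

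Finally, the values $\chi(\cO)$ and $\chih(\cO)$ in each class follow from \Cref{two:thm}(c),(e) with $l_{1,1}=\lcm(c',2)$ and $b=1$, combining the inductively known statistics of $\widetilde{C}_{n,k}$ with those of $\{v\}$. The homometry assertion is then immediate and needs no separate argument: within each parity case the listed orbit lengths are pairwise distinct — for odd $k$ one checks $(k+1)2^n-2k+1$ is odd and hence $\neq2$; for even $k$ the values $k(i-1)+2$ are strictly increasing in $i$ and none equals $k(n-1)+3$, since $k(n-i)=-1$ is impossible — so "same orbit size" coincides with "same class", on which $\chi$ and $\chih$ are constant by the tables. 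The main obstacle I anticipate is purely the closed-form bookkeeping in the even case: carrying the $\chih$-contributions of the fattened backbone and the inserted root tile cleanly across the $\cS_i$ so as to reproduce the stated quadratics in $n$ and $i$, including the constant term $\binom{k-2}{2}$, without arithmetic slips.
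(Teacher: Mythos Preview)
Your proposal is correct and follows essentially the same route as the paper: induct on $n$, decompose $C_{n+1,k}\setminus\{\zh\}$ as the fattened comb $\widetilde{C}_{n,k}$ (the paper calls it $C_{n,k}'$) disjoint union a single leaf, use \Cref{add:zh} to pass from $C_{n,k}$ to the fattened version, and then apply \Cref{two:thm} with the parity of the large-orbit length governing whether it fuses or splits. Your explicit verification that the orbit lengths within each parity case are pairwise distinct, so that homometry is automatic from the tables, is a detail the paper leaves implicit.
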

\begin{proof}
We will just verify the orbit structure as, once that is done, the calculation of $\chi$ and $\chih$ are routine using \Cref{add:zh}
and \Cref{two:thm}.  We will induct on $n$ where the base case is easy. 
Note that $C_{n+1,k}\setm\{\zh\}=C_{n,k}'\uplus\{v\}$ where $v$ is the leaf labeled $n+2$ and $C_{n,k}'$ is $C_{n,k}$ with its $\zh$-interval replaced by one with $k$ elements.  It follows from \Cref{add:zh} that the orbits of these two posets are identical except for the orbit of $\zh$ whose $[n+1]$-tile has been widened by adding $k-1$ columns.

We now consider what happens when $k$ is odd.  The orbits of length $2$ for $C_{n,k}'$ combine with the orbit of length $2$ for $\{v\}$ in exactly the same way as in the proof of \Cref{C_n:thm}.  As far as the orbit containing $\zh$ in $C_{n,k}'$, by induction and the last sentence of the previous paragraph it has length
$$
[(k+1)2^n-2k+1]+k-1 = (k+1)2^n-k
$$
which is odd by the parity of $k$.  So, by \Cref{two:thm}, the orbit containing $\zh$ in $C_{n+1,k}$ has length
$$
\lcm((k+1)2^n-k,2) + 1 =
2[(k+1)2^n-k]+1 = (k+1)2^{n+1}-2k+1
$$
which is the desired quantity.

When $k$ is even we have, by induction, that all the orbits of $C_{n,k}$ have even length except for the orbit of $\zh$ whose length is odd.  It follows that all the orbits of $C_{n,k}'$ are of even length.  So, when each non-$\zh$ is  combined with $v$'s orbit of length $2$, this will result in two orbits of the same length.  This accounts for the orbits in $\cS_i$ of $C_{n+1,k}$ for $i<n$.  The $\zh$-orbit of $C_{n,k}'$ will have length
$$
[k(n-1)+3]+k-1 = kn+2.
$$
Since this is even, when it combines with $v$'s orbit it will produce $\gcd(kn+2,2)=2$ orbits for $C_{n+1,k}$.  One of these will be of size $\lcm(kn+2,2)=kn+2$ and that one will take care of $\cS_n$.  The other will have length one more and will be the orbit in $\cL$.
 \end{proof}
 
\begin{figure}
 \begin{tikzpicture}
\fill(2,0) circle(.1);
\fill(0,1) circle(.1);
\fill(4,1) circle(.1);  
\fill(0,2) circle(.1);
\fill(1,2) circle(.1); 
\fill(3,2) circle(.1);
\fill(4,2) circle(.1);
\fill(0,3) circle(.1);
\fill(1,3) circle(.1); 
\fill(3,3) circle(.1);
\fill(4,3) circle(.1); 
\fill(0,4) circle(.1);
\fill(1,4) circle(.1); 
\fill(3,4) circle(.1);
\fill(4,4) circle(.1); 
\draw (0,4)--(0,1)--(2,0)--(4,1)--(4,4) 
(0,1)--(1,2) (0,2)--(1,3) (0,3)--(1,4)
(4,1)--(3,2) (4,2)--(3,3) (4,3)--(3,4);
 \end{tikzpicture}
    \caption{The zipper $Z_3$}
    \label{Z_3}
\end{figure}
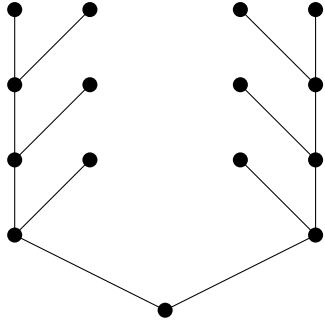
 
 Another way to modify combs is by combining them together.  If $T$ is a rooted tree and $T\setm\{\zh\}=T'\uplus T''$ then we will also write
 $T=T'\oplus T''$.  Define the {\em zipper}, $Z_n$, to be
 $$
 Z_n = C_n \oplus C_n
 $$
 A picture of $Z_3$ will be found in \Cref{Z_3}.
\begin{thm}
The orbits of $Z_n$ can be partitioned into four sets $\cS$, $\cM$, $\cL$, and $\cG$ (for gigantic).  The properties of the orbits is summarized in the following table:
\bce
    \begin{tabular}{|c|c|c|c|c|}
       \hline & $\cS$ &$\cM$ &  $\cL$ & $\cG$\\ \hline
        $\#\cO$ & $2$& $2^{n+1}-1$& $2^{n+1}$ & $2^{n+2}-2$\\[2pt]
        number of $\cO$ & $2^{2n-1}$& $2^{n+1}-2$&  $1$& $2^n$\\ [2pt]
        $\chi(\cO)$ & $2n+2$ & $2^n(2n+1)$ & $2^n(2n+1) +1 $ & $2^n(4n+3)-n-1$\\[2pt]
        $\chih(\cO)$ & $6n+4$& $3\cdot2^n(2n-1)+5$& $3\cdot2^n(2n-1)+5$ & $2^{n-1}(51n-25)+3$\\[2pt]\hline
    \end{tabular}
\ece
Thus $\chi$ and $\chih$ are homometric on $Z_n$.
\end{thm}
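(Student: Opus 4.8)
The plan is to read $Z_n=C_n\oplus C_n$ as an instance of \Cref{two:thm}, taking $T'=T''=C_n$ and the branch $B=\{\zh\}$ of the root, so that $b=\#B=1$. The only external input needed is the comb classification of \Cref{C_n:thm}: each copy of $C_n$ has $2^{n-1}$ small orbits, every one of which has $2$ columns with $\chi=n+1$ and $\chih=3n+1$, together with a single large orbit having $c=2^{n+1}-1$ columns, $\chi=(2n+1)2^{n-1}$, and $\chih=2^{n-1}(6n-5)+3$. Everything else is produced by feeding these numbers into \Cref{two:thm}(a) and (c),(e).

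First I would settle the orbit structure, i.e.\ the first two rows of the table. By \Cref{two:thm}(a) every orbit of $Z_n$ comes from choosing an orbit $\cO_i'$ of the first copy and an orbit $\cO_j''$ of the second, and such a pair contributes $\gcd(c_i',c_j'')$ orbits each of length $\lcm(c_i',c_j'')$. Pairing two small orbits gives $\gcd(2,2)=2$ orbits of length $2$; running over the $2^{n-1}\cdot 2^{n-1}$ choices produces the $2^{2n-1}$ orbits of $\cS$. Pairing the large orbit of one copy with a small orbit of the other is the decisive case: because $2^{n+1}-1$ is odd we get $\gcd(2^{n+1}-1,2)=1$ and $\lcm(2^{n+1}-1,2)=2^{n+2}-2$, so each of the $2\cdot 2^{n-1}=2^n$ such pairs yields one orbit of length $2^{n+2}-2$, and these are exactly $\cG$. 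Finally the large--large pairing gives $\gcd(2^{n+1}-1,2^{n+1}-1)=2^{n+1}-1$ orbits of length $2^{n+1}-1$; but by the special clause of \Cref{two:thm}(a) precisely one of them, the orbit $\tau^{1,1}_1$ containing $\zh$, absorbs an $[2n+2]\times b$ black tile and so has length $(2^{n+1}-1)+b=2^{n+1}$. That distinguished orbit is $\cL$, the remaining $2^{n+1}-2$ orbits form $\cM$, and since the four lengths $2,\ 2^{n+1}-1,\ 2^{n+1},\ 2^{n+2}-2$ are pairwise distinct for $n\ge1$, the size-classes coincide with $\cS,\cM,\cL,\cG$.

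The statistic rows then reduce to substitution. In every case $b=1$ kills the $\de\binom{b}{2}$ term of \Cref{two:thm}(e), and $\de=1$ only for the single orbit $\cL$. The computation for $\cS$, $\cM$, $\cL$ uses the ratios $l/c\in\{1,2\}$ and is immediate; the $\cG$ case hinges on the one simplification $l_{1,j}/c_1'=(2^{n+2}-2)/(2^{n+1}-1)=2$, which clears the denominator attached to the large comb orbit, while the small factor contributes with weight $(2^{n+2}-2)/2=2^{n+1}-1$. Plugging the comb values of \Cref{C_n:thm} into \Cref{two:thm}(c) and (e) and collecting terms then yields the tabulated closed forms for $\chi$ and $\chih$. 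Because each of these quantities is constant on each size-class and the classes have distinct sizes, the homometry of $\chi$ and $\chih$ is automatic.

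The main thing requiring care is the large--large pairing, where one must remember that it is the unique root-containing orbit---not one of the generic length-$(2^{n+1}-1)$ orbits---whose length is increased by $b$, so that $\cL$ and $\cM$ are separated by the inserted $\zh$-tile rather than by any difference in the underlying comb orbits. The accompanying parity remark, that $2^{n+1}-1$ is odd and hence relatively prime to the length-$2$ small orbits, is what forces the gigantic orbits to have length $2^{n+2}-2$; I expect these two observations to be the only non-mechanical steps, the rest being arithmetic of the same flavor already carried out for combs and for the three-leaf trees $T_k$.
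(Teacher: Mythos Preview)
Your proposal is correct and follows essentially the same approach as the paper: decompose $Z_n=C_n\oplus C_n$ via \Cref{two:thm} with $b=1$, feed in the comb data from \Cref{C_n:thm}, and sort the resulting orbits by the three pairing types (small--small, small--large, large--large), with the $\zh$-orbit singled out in the last case. Your explicit remark that the four orbit lengths are pairwise distinct is a useful addition for the homometry conclusion that the paper leaves implicit.
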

\begin{proof}
As usual, we will just give details about the orbit structure.  Since $Z_n\setm\{\zh\}$ is a disjoint union of two copies of $C_n$, we  use Theorems~\ref{C_n:thm} and~\ref{two:thm}.
Let $\cS'$ and $\cL'$ refer to the orbit partition of $C_n$ and use unprimed notation for $Z_n$.

Combining two orbits from $\cS'$ gives $\gcd(2,2)=2$ orbits of $Z_n$ of length $\lcm(2,2)=2$.  Since $\#\cS'=2^{n-1}$, the total number of orbits formed in this way is
$$
2\cdot 2^{n-1}\cdot 2^{n-1}= 2^{2n-1}.
$$
These are the orbits of $\cS$.

Putting together an orbit from $\cS'$ with the unique orbit in $\cL'$ results in $\gcd(2,2^{n+1}-1)=1$ orbit of size
$\lcm(2,2^{n+1}-1)=2^{n+2}-2$.  Now the total number of orbits is
$$
2\cdot 2^{n-1}\cdot 1 =2^n
$$
and they are the orbits in $\cG$.

Finally, the combination of the orbit in $\cL'$ with itself gives $\gcd(2^{n+1}-1,2^{n+1}-1)=2^{n+1}-1$ orbits.  All of these orbits will have length 
$\lcm(2^{n+1}-1,2^{n+1}-1)=2^{n+1}-1$ except for the one containing $\zh$ which will have one more element.  These orbits are precisely the ones in $\cM\uplus\cL$, and so we are done.
\end{proof}


\section{Comments and open questions}
\label{coq}

\begin{figure}
 \begin{tikzpicture}
\fill(2,0) circle(.1);
\fill(0,1) circle(.1);
\fill(4,1) circle(.1);
\fill(-1,2) circle(.1);
\fill(1,2) circle(.1); 
\fill(3,2) circle(.1);
\fill(5,2) circle(.1);
\fill(-1.5,3) circle(.1);
\fill(-.5,3) circle(.1); 
\fill(.5,3) circle(.1);
\fill(1.5,3) circle(.1); 
\fill(2.5,3) circle(.1);
\fill(3.5,3) circle(.1);
\fill(4.5,3) circle(.1); 
\fill(5.5,3) circle(.1);
\draw(-.5,3.5) node{$x$};
\draw(1.5,3.5) node{$y$};
\draw(4,.5) node{$z$};
\draw (-.5,3)--(-1,2)--(-1.5,3)
(.5,3)--(1,2)--(1.5,3)
(2.5,3)--(3,2)--(3.5,3)
(4.5,3)--(5,2)--(5.5,3)
(-1,2)--(0,1)--(2,0)--(4,1)--(5,2) (0,1)--(1,2) (4,1)--(3,2);
\end{tikzpicture}
    \caption{A complete binary tree}
    \label{bin}
\end{figure}
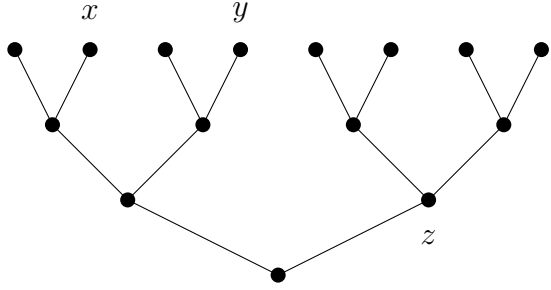

\subsection{Other trees}

The trees considered in the previous section had such nice homometry properties that one might ask if the same is true for other binary trees.  In particular, one could consider the complete binary trees which are those all of whose leaves are at the same rank.  Such a tree is displayed in Figure~\ref{bin}.  Unfortunately, homometry fails for this example tree.  Consider the orbit $\cO$ which contains the antichain $\{x,y\}$ as well as the one $\cO'$ which contains $\{z\}$.  Then it is easy to verify that $\#\cO=\#\cO'=4$.  But
$$
\text{$\chi(\cO)=15\neq 14 =\chi(\cO')$ and $\chih(\cO)=35\neq 26 =\chih(\cO')$.
}
$$

As mentioned in the introduction, Elizalde et al.~\cite{EPRS:rf} considered fences whose Hasse diagrams are paths with any number of minimal elements.  Here we have concentrated on arbitrary trees, but insisted that there be a unique minimal 
element.  It would be interesting to study  trees which are not paths and also not rooted.

\subsection{Piecewise-linear and birational rowmotion}

There are two generalizations of rowmotion which have also been studied and which have consequences for trees.  We first need to describe rowmotion in terms of toggles.   In the discussion which follows  we will just write ideal for lower order ideal. 

If $(P,\le_P)$ is a finite poset and $x\in P$ then the corresponding {\em toggle map} is 
$t_x:\cL(P)\ra \cL(P)$ defined by
$$
t_x(L) = \case{ L\triangle \{x\} }{if $L\triangle \{x\}\in\cL(P)$, }{L}{else}
$$
where $\triangle$ denotes symmetric difference of sets.
A {\em linear extension} of $P$ is a listing of $P$'s elements $x_1,x_2,\ldots,x_p$ such that $x_i\le_P x_j$  implies $x_i$ is weakly left of $x_j$ in the sequence, that is, $i\le j$.  Cameron and Fon-Der-Flaass showed that rowmotion on ideals can be broken into a sequence of toggles.  In what follows we compose functions right to left.
\begin{thm}[\cite{CFDF:oar}]
For any finite poset $P$ and any linear extension
$x_1,x_2,\ldots,x_p$ of $P$ we have

\vs{5pt}

\eqqed{
\rhoh = t_{x_1} t_{x_2}\cdots t_{x_p}.
}
\end{thm}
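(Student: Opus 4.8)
The plan is to analyze the composite sweep $t_{x_1}t_{x_2}\cdots t_{x_p}$ directly, exploiting that $x_1,\dots,x_p$ is a linear extension, so $x_i\le_P x_j$ forces $i\le j$. Because we compose right to left, the toggles fire in the order $t_{x_p},t_{x_{p-1}},\dots,t_{x_1}$, which processes $P$ from the top down: at the moment $t_{x_i}$ is applied, every $y>_P x_i$ has index $>i$ and so has already been toggled, while every $y<_P x_i$ has index $<i$ and so has not yet been touched and still sits in its original $L$-state. First I would record the elementary fact that each element of $P$ is toggled exactly once, so the membership of $x$ in the final ideal $M:=t_{x_1}\cdots t_{x_p}(L)$ is decided once and for all at the single step when $t_x$ fires.

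Then I would pin down that decision by splitting on whether $x\in L$. Since $t_x$ replaces the current ideal $L_{\mathrm{cur}}$ by $L_{\mathrm{cur}}\triangle\{x\}$ only when the result is again an ideal, the firing criteria are local: adding $x$ requires all $y<_P x$ to be present in $L_{\mathrm{cur}}$, and removing $x$ requires all $y>_P x$ to be absent. Combined with the top-down observation this gives two rules for $M$. (i) If $x\notin L$, then the elements below $x$ are still untouched when $t_x$ fires, so $x$ is added iff every $y<_P x$ lies in $L$, i.e.\ iff $x\in\min(P\sm L)$; once added it is never revisited. (ii) If $x\in L$, then the elements above $x$ are already frozen at their final values when $t_x$ fires, so $x$ survives iff some $y>_P x$ belongs to $M$.

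Finally I would check that the target ideal obeys the very same rules. Unwinding the definitions gives $\rhoh(L)=\rho(\max L)\da=\min(P\sm L)\da$, because $(\max L)\da=L$. A short verification shows $\min(P\sm L)\da$ satisfies (i) and (ii): for $x\notin L$, lying in $\min(P\sm L)\da$ forces $x$ itself to be the witnessing minimal element of the complement (anything strictly below such an element is in $L$), matching (i); and for $x\in L$, one has $x\le w$ for some $w\in\min(P\sm L)$ precisely when some $y>_P x$ lies in the ideal, matching (ii). The pair of rules (i)--(ii) is a recursion whose dependencies point strictly upward in $P$—case (i) is outright and case (ii) refers only to strictly larger elements—so since $P$ is finite it is well founded and has a unique solution. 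As both $M$ and $\rhoh(L)$ solve it, they coincide.

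The hard part will be case (ii): its validity rests entirely on the top-down processing order guaranteeing that all elements above $x$ have reached their final state before $t_x$ fires, and on verifying that the resulting recursion is genuinely well founded (upward-pointing, terminating at the maximal elements) rather than circular. Everything else—the base case at maximal elements, and the routine translations between $\min(P\sm L)$, downward closure, and the toggle conditions—should be straightforward once the order structure of the sweep is set up carefully.
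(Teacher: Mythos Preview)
The paper does not give its own proof of this statement: it is quoted from Cameron and Fon-Der-Flaass and marked with a \qed\ immediately after the displayed formula, with no argument supplied. So there is nothing in the paper to compare against.

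That said, your argument is correct and is essentially the standard one. The two local rules you isolate are exactly right, and the key structural point --- that processing a linear extension from right to left means every element strictly above $x$ is already in its final state when $t_x$ fires, while every element strictly below $x$ is still in its original $L$-state --- is the heart of the matter and you state it cleanly. The identification $\rhoh(L)=\min(P\setminus L)\!\downarrow$ and the verification that this ideal satisfies your rules (i)--(ii) are routine and correct. The uniqueness step via upward-pointing recursion is sound: rule~(i) is absolute and rule~(ii) depends only on strictly larger elements, so downward induction from the maximal elements of $P$ pins the answer down. One small remark: you might make explicit that each intermediate $L_{\mathrm{cur}}$ remains an ideal (so the toggle conditions you quote are the right ones throughout), but this is immediate since each $t_x$ maps $\cL(P)$ to itself.
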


Stanley~\cite{sta:tpp} introduced the order polytope as a way to use geometry to study posets.
Poset $P=\{x_1,\ldots,x_p\}$ has {\em order polytope}
$$
\Pi(P)=
\{ (f(x_1),\ldots, f(x_p))\in[0,1]^p \mid
\text{$x_i\le_P x_j$ implies $f(x)\le f(y)$}\}.
$$
So $\Pi(P)$ is a subpolytope of the $p$-dimensional unit cube.  Also note that every ideal $L$ of $P$ has a corresponding point of $\Pi(P)$ defined by the function
$$
f(x) =\case{0}{if $x\not\in P$,}{1}{if $x\in P$.}
$$
Einstein and Propp~\cite{EP:plb} extended rowmotion to $\Pi(P)$.  
Write $x\lessdot y$  if $x$ is covered by $y$ in $P$, that is $x<_P y$ and there is no $z$ with 
$x<_P z<_P y$.
If $f\in\Pi(P)$ and $x\in P$ then define the {\em piecewise-linear toggle} $\si_x$ of $f$ at $x$ to be $g=\si_x f\in\Pi(P)$ where
\begin{equation}
\label{rho_PL}
   g(v)=\case{M+m-f(x)}{if $v=x$,}{f(v)}{if $v\neq x$} 
\end{equation}
using the notation
\begin{equation}
   \label{max:min} 
   \text{$M=\max_{y\lec x} f(y)$ and $m=\min_{z\grc x} f(z) $}.
\end{equation}
It is not hard to verify from the definitions that $g\in\Pi(P)$.  One can also show that $\si_x$ is  an involution just like $t_x$, and $\si_x$ is also piecewise-linear as a function.
Finally, one defines {\em piecewise-linear rowmotion},
$\rho_{\PL}:\Pi(P)\ra\Pi(P)$, by
$$
\rho_{\PL} = \si_{x_1}\si_{x_2}\cdots\si_{x_p}
$$
where $x_1,x_2,\ldots,x_p$ is a linear extension of $P$.  It is true, but not obvious from the equation just given, that $\rho_{\PL}$ is well defined in that it does not depend on the chosen linear extension.  Since $\Pi(P)$ has an infinite number of points, it is very possible for orbits of $\rho_{\PL}$ to be infinite. 
However, in certain cases the orbits are nice.  Take, for example, the poset $[p]\times[q]$ which is the poset product of a $p$-element chain and a $q$-element chain.
\begin{thm}[\cite{EP:plb}]
The order of $\rho_{\PL}$ on $[p]\times[q]$ is $p+q$.\hqed
\end{thm}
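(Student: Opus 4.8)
The plan is to pin down the order of $\rho_{\PL}$ on $P:=[p]\times[q]$ by proving two divisibilities. On the one hand I will show that $\rho_{\PL}^{p+q}$ is the identity on all of $\Pi(P)$, so that the order divides $p+q$; on the other hand I will exhibit a finite $\rho_{\PL}$-invariant subset on which $\rho_{\PL}$ already has order $p+q$, forcing $p+q$ to divide the order. Together these give that the order is exactly $p+q$.

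For the lower bound I would restrict $\rho_{\PL}$ to the $0/1$-valued points of $\Pi(P)$. These are exactly the vertices of $\Pi(P)$, which are in bijection with $\cL(P)$, and they form a $\rho_{\PL}$-invariant set: if $f$ takes values in $\{0,1\}$ then in \eqref{max:min} both $M$ and $m$ lie in $\{0,1\}$ with $M\le f(x)\le m$, so the reflected value $M+m-f(x)$ again lies in $\{0,1\}$, and it differs from $f(x)$ precisely when $(M,m)=(0,1)$, i.e.\ precisely when toggling $x$ preserves order. Thus on these points each $\si_x$ agrees with the combinatorial toggle $t_x$, and $\rho_{\PL}$ agrees with $\rhoh$ under the identification of vertices with $\cL(P)$. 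Since the order of combinatorial rowmotion on $\cL([p]\times[q])$ is the classical value $p+q$ (Fon-Der-Flaass; this is also visible from the Stanley--Thomas word, which $\rhoh$ rotates cyclically by one step), and since the order of any restriction to an invariant set divides the order of the whole map, $p+q$ divides the order of $\rho_{\PL}$.

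The upper bound is where the real work lies, and I would obtain it by lifting to birational rowmotion. Replacing $\max$ by $+$, $+$ by $\times$, and $-$ by $\div$ in \eqref{rho_PL} and \eqref{max:min} turns each PL toggle $\si_x$ into a birational toggle, and their composite along a linear extension into birational rowmotion $R$, a subtraction-free rational map on labelings of $P$; by construction $\rho_{\PL}$ is the tropicalization of $R$. Grinberg and Roby proved that $R^{p+q}=\mathrm{id}$ on $[p]\times[q]$, and crucially this holds as an identity of subtraction-free rational expressions, i.e.\ in the free positive semifield. Because tropicalization is a semifield homomorphism and hence commutes with composition of subtraction-free maps, applying it to $R^{p+q}=\mathrm{id}$ yields $\rho_{\PL}^{p+q}=\mathrm{id}$, which is the upper bound.

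I expect the main obstacle to be precisely the birational periodicity $R^{p+q}=\mathrm{id}$: it is the one step that is not formal. Its proof requires an explicit closed form for the iterates of $R$ on the grid $[p]\times[q]$ together with a verification that the $(p+q)$-th iterate collapses to the identity, which is a genuine computation rather than a soft argument. If one wished to avoid invoking the birational result, the alternative would be a direct PL argument organized by the regions of linearity of $\rho_{\PL}^{p+q}$: on each such region the iterate is affine and fixes the many $0/1$-points it contains by the lower-bound analysis, and one would try to conclude that the affine map is the identity there. The delicate point in this alternative is controlling how the regions of linearity of the iterate meet, so that fixing the $0/1$-vertices really forces the affine map to be the identity on a full-dimensional piece; making that rigorous is essentially as hard as the birational computation it replaces.
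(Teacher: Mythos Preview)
The paper does not prove this theorem: it is stated as a cited result from Einstein--Propp with a \texttt{\textbackslash hqed} and no argument given. So there is no proof in the paper to compare your proposal against.

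That said, your strategy is sound. The lower bound via restriction to the $0/1$ vertices is correct and standard: on those points each $\si_x$ does reduce to $t_x$, and combinatorial rowmotion on $[p]\times[q]$ has order exactly $p+q$. For the upper bound you invoke the Grinberg--Roby birational periodicity and tropicalize; this works, since tropicalization respects composition of subtraction-free maps and the identity $R^{p+q}=\mathrm{id}$ holds as a subtraction-free rational identity. One historical wrinkle: the Einstein--Propp result you are asked to prove predates the Grinberg--Roby theorem you are using, so your argument is not the original one, but it is logically valid and is in fact the cleanest route now available. Your honest flagging of the birational step as the nontrivial ingredient is appropriate; the alternative direct PL argument you sketch would indeed be at least as hard.
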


One can extend piecewise-linear rowmotion even further to the birational realm by detropicalizing as done by Grinberg and Roby~\cite{GR:ipb1,GR:ipb2}. This means that in equations~\eqref{rho_PL} and~\eqref{max:min} sum becomes product, difference becomes quotient, and maximum become sum.  To take care of the minimum, we use the previous dictionary and the fact that for any set $S$ of real numbers
$\min S = -\max(-S)$ where $-S=\{-s \mid s\in S\}$.
Now let $P$ be a finite poset and let $\Ph$ be $P$ with a minimum element $\zh$ and a maximum element $\oh$ added.  Let $\bbF$ be a field and consider a function $f:\Ph\ra\bbF$.  The
{\em birational toggle} of $f$ at $x\in P$ is $g=T_x f$ where
$$
g(v)=
\case{ \dil\frac{\sum_{y\lec x} f(y)}{f(x) \sum_{z\grc x} f(z)^{-1} }}{if $v=x$,}
{f(v)}{if $v\neq x$.\rule{0pt}{20pt}} 
$$
One can verify that $T_x$ is an involution, is a birational function, and that the following is well defined.
Define {\em birational rowmotion} on functions $f:\Ph\ra\bbF$ as
$$
\rho_{\B} = T_{x_1}T_{x_2} \cdots T_{x_p}
$$
where, as usual, $x_1,x_2,\ldots,x_p$ is a linear extension of $P$.  It is even more surprising when birational orbits are finite.  Indeed, $\rho_{\B}$ being of finite order implies this is true for $\rho_{\PL}$.  Again, everything works well for rectangular posets
\begin{thm}[\cite{GR:ipb2}]
The order of $\rho_{\B}$ on $[p]\times[q]$ is $p+q$.\hqed
\end{thm}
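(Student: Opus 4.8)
The statement is the Grinberg--Roby theorem, so the cited proof lies in \cite{GR:ipb2}; here is the route I would take. The plan is to prove the two divisibilities $\ord(\rho_\B)\mid(p+q)$ and $(p+q)\mid\ord(\rho_\B)$, the first being by far the harder. Throughout I would pass to the \emph{generic} setting: take the values $f(x)$ for $x\in\widehat{[p]\times[q]}$ to be algebraically independent indeterminates over $\bbQ$, so that every toggle $T_x$ and every iterate $\rho_\B^\ell(f)$ is a well-defined nonzero element of the rational function field they generate. An identity $\rho_\B^{p+q}=\id$ proved there then specializes to every field $\bbF$ and every point at which the iterates are defined, reducing the theorem to a single identity of rational functions.

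For the upper bound $\rho_\B^{p+q}=\id$ I would seek an explicit closed form for the iterates. Organize them into the three-dimensional array $A(i,j,\ell)=\rho_\B^\ell(f)(i,j)$ indexed by $(i,j)\in[p]\times[q]$ and $\ell\in\bbZ$. Working directly from $\rho_\B=T_{x_1}\cdots T_{x_p}$ and the quotient form of $T_x$, I would show that after a fixed gauge (rescaling each entry by a monomial in the boundary data at $\zh$ and $\oh$) the array satisfies a Hirota/Dodgson octahedron recurrence, $A(i,j,\ell+1)\,A(i,j,\ell-1)=A(i+1,j,\ell)\,A(i-1,j,\ell)+A(i,j+1,\ell)\,A(i,j-1,\ell)$, with boundary values prescribed on the faces of the rectangle. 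Such a recurrence is solved explicitly by ratios of ``partition functions'' that are sums of monomials in the original $f$-values indexed by families of non-crossing monotone lattice paths, and the combinatorial symmetry of these indexing regions under the shift $\ell\mapsto\ell+(p+q)$ forces $A(i,j,\ell+p+q)=A(i,j,\ell)$; this is exactly $\rho_\B^{p+q}=\id$. This is the birational avatar of Zamolodchikov-type periodicity for the $A\times A$ octahedron recurrence.

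For the matching lower bound I would specialize rather than compute. Since $\rho_\B$ is the detropicalization of $\rho_\PL$, the relation $\rho_\B^k=\id$ descends to $\rho_\PL^k=\id$, as the paper already notes; and $\rho_\PL$ restricted to the $0/1$ vertices of $\Pi([p]\times[q])$ is exactly combinatorial rowmotion $\rhoh$ on $\cL([p]\times[q])$. Hence $\ord(\rhoh)\mid\ord(\rho_\B)$. Because combinatorial rowmotion on $[p]\times[q]$ is classically known to have order exactly $p+q$, we obtain $(p+q)\mid\ord(\rho_\B)$, and together with the upper bound this pins the order at exactly $p+q$.

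The crux is the middle paragraph. The delicate points are choosing the gauge factors so that the nonlinear toggle relations collapse into the bilinear octahedron identity, and then proving that the lattice-path regions are genuinely invariant under the period-$(p+q)$ shift rather than merely periodic up to a nonzero scalar; this symmetry is where the value $p+q$ actually enters. Should the gauge bookkeeping prove intractable, the fallback is to prove the explicit lattice-path formula for $A(i,j,\ell)$ directly by induction on $\ell$ from the toggle definition, trading the conceptual bilinear identity for a longer but elementary verification.
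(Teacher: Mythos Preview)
The paper does not prove this theorem; it is quoted from \cite{GR:ipb2} with the \verb|\hqed| marker indicating that the proof is omitted and lives in the cited reference. So there is no proof in the present paper to compare your proposal against, and you are correct in your opening sentence that the argument must be sought elsewhere.

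As a brief remark on the substance of your sketch: the overall architecture (generic field, upper bound $\rho_\B^{p+q}=\id$ by explicit formula, lower bound by specialization to combinatorial rowmotion) is the right shape, and the lower-bound paragraph is fine as stated. The upper-bound paragraph, however, blends two distinct proof traditions. Grinberg and Roby in \cite{GR:ipb2} do not pass through the octahedron recurrence; they give a direct ratio-of-path-partition-functions formula for $\rho_\B^\ell(f)(i,j)$ and verify it by induction on $\ell$ using the toggle definition---essentially your ``fallback'' plan. The octahedron/Hirota route you put first is closer to Volkov's proof of $Y$-system periodicity and requires a nontrivial change of variables before the bilinear identity appears; the ``gauge'' step you flag as delicate is genuinely so, and the periodicity one extracts there is a priori only up to a multiplicative constant that must be separately shown to be $1$. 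If you intend to reconstruct the cited proof rather than an alternative, promote your fallback to the main line.
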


Call a poset $P$ {\em graded} if all chains from a minimal element of $P$ to a maximal element have the same length.  Grinberg and Roby consider a class of inductively defined posets which they call skeletal and includes graded rooted forests, that is, disjoint unions of rooted trees such that all leaves have the same rank.  In this context, they prove the following result.
\begin{thm}[\cite{GR:ipb1}]
If $P$ is a skeletal poset then $\rho_{\B}$ has finite order.
\end{thm}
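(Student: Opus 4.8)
The plan is to induct on the recursive construction of skeletal posets. Recall that the class of skeletal posets is the smallest one containing the empty poset and closed under the two operations of \emph{disjoint union} $P,Q\mapsto P\uplus Q$ and \emph{adjoining a new extremum} (a single element placed below, or dually above, all of $P$); graded rooted forests arise exactly this way, a root being an adjoined minimum and the branches above it forming a disjoint union of smaller skeletal posets. For the base cases---the empty poset and a single point---$\rho_{\B}$ is the identity or an involution, so its order is finite. It therefore suffices to show that each of the two operations preserves finiteness of the order of $\rho_{\B}$, and I would treat them separately.

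The disjoint-union step I expect to be straightforward. For $P=P_1\uplus P_2$ the augmented poset $\Ph$ has a single $\zh$ below and a single $\oh$ above both pieces, but no element of $P_1$ is comparable to any element of $P_2$. Since $\zh$ and $\oh$ are never toggled, their values stay constant, and the birational toggle $T_x$ at any $x\in P_1$ involves only coordinates of $P_1\cup\{\zh,\oh\}$ and none of $P_2$, and conversely. Hence the toggles coming from $P_1$ commute with those coming from $P_2$, and choosing a linear extension that lists all of $P_1$ before all of $P_2$ factors $\rho_{\B}$ as a product of two commuting maps acting on disjoint coordinate blocks, each of which is exactly birational rowmotion on $\Ph_1$ or $\Ph_2$. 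Consequently the order of $\rho_{\B}$ on $P$ is $\lcm(N_1,N_2)$, where $N_i$ denotes the order on $P_i$, and this is finite by the inductive hypothesis.

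The adjoin-an-extremum step is the crux. Say $P'=\{r\}\cup P$ with $r$ a new global minimum, so that in $\Ph'$ the element $r$ is covered by the minimal elements of $P$ and covers $\zh$, while every minimal element of $P$ now sees $r$ in place of $\zh$ below it. Writing $\rho_{\B}=\Phi\circ T_r$ for a linear extension beginning with $r$, the composite $\Phi$ of the remaining toggles is formally the birational rowmotion of $\Ph$ except that the value $f(\zh)$ feeding the minimal elements of $P$ has been replaced by the \emph{time-varying} value $f(r)$. The goal is to establish a birational identity that conjugates the dynamics on $P'$ into the dynamics on $P$ together with a controlled, eventually periodic evolution of the single extra coordinate $f(r)$; this is where the homogeneity and reciprocity symmetries of the birational toggles that Grinberg and Roby develop enter, allowing $f(r)$ to be expressed in closed form in terms of the orbit of $\rho_{\B}$ on $\Ph$. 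Granting such an identity, finiteness of the order on $\Ph$ forces finiteness on $\Ph'$, and the dual case of adjoining a maximum follows from the fact that $\rho_{\B}$ on a poset and on its dual have the same order.

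The main obstacle, and the technical heart of the argument, is precisely this last identity: because the adjoined element couples to \emph{all} of the extremal elements of $P$ at once, one must track how the rational function $f(r)$ threads through an entire period of rowmotion and verify that it returns to its starting value simultaneously with the remaining coordinates. Carrying this out requires the careful rational-function bookkeeping---cancellation of the products and sums in the toggle formula---that makes the birational case genuinely harder than the piecewise-linear or combinatorial ones, and it is the step on which I would spend essentially all of the effort.
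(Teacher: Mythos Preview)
The paper does not prove this statement at all: it is quoted from Grinberg and Roby \cite{GR:ipb1} as background in the final comments section, with no argument given. There is therefore no ``paper's own proof'' to compare your proposal against.

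As for the proposal on its own terms: the inductive scheme you outline (base cases, closure under disjoint union, closure under adjoining a global extremum) is indeed the architecture of the Grinberg--Roby argument, and your treatment of the disjoint-union step is correct. But you explicitly do not prove the adjoin-extremum step; you describe what one would \emph{like} to establish (``Granting such an identity\ldots'') and correctly identify it as the technical heart, then stop. So what you have written is an accurate outline of the strategy, not a proof. The actual content of \cite{GR:ipb1} lies precisely in the rational-function identities you allude to, and nothing in your proposal supplies or even sketches them beyond saying they should exist.
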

They also give a formula for order of $\rho_B$ in the case that $P$ is a graded rooted forest which agrees with the results in \Cref{estar} for graded extended stars.  A natural question is whether $\rho_{\B}$ has finite order for any rooted trees which are not graded.  Computer experiments suggest that this is not the case, although we have not been able to provide a proof.  Specifically, $200$ trials were run on $16$ posets, and in all but one case the orbit had not repeated after $1,000,000$ iterations of rowmotion.



\nocite{*}
\bibliographystyle{alpha}

\end{document}